\theoremstyle{plain}
\newtheorem{theorem}{Theorem}[subsection]
\newtheorem{theo}{Theorem}
\newtheorem{proposition}[theorem]{Proposition}
\newtheorem{corollary}[theorem]{Corollary}
\newtheorem{lemma}[theorem]{Lemma}
\newtheorem{definition-proposition}[theorem]{Definition/Proposition}
\theoremstyle{definition}
\newtheorem{definition}[theorem]{Definition}
\theoremstyle{remark}
\newtheorem{remark}[theorem]{Remark}
\theoremstyle{example}
\newtheorem{example}[theorem]{Example}
\theoremstyle{notation}
\newcommand\C{\mathbb{C}}
\newcommand\R{\mathbb{R}}
\newcommand\h{{\mathcal{H}}}
\newcommand\p{{\text{\bfseries{P}}}}
\newcommand\Dr{\mathrm{D}}
\DeclareMathOperator*{\adm}{adm}
\newcommand\dz{\frac{\partial}{\partial z}}
\newcommand\dt{\frac{\partial}{\partial t}}
\newcommand{\Addresses}{{
  \bigskip
  \footnotesize

  \textit{E-mail address} : \texttt{robtim13@hotmail.fr}}}
\title{Moduli of Legendrian foliations and quadratic differentials in the Heisenberg group}
\author{Robin Timsit}
\date{}
\begin{document}

\maketitle
\abstract

The aim of the paper is to prove the following result concerning moduli of curve families in the Heisenberg group. Let $\Omega$ be a domain in the Heisenberg group foliated by a family $\Gamma$ of legendrian curves. Assume that there is a quadratic differential $q$ on $\Omega$ in the kernel of an operator defined in \cite{Tim2} and every curve in $\Gamma$ is a horizontal trajectory for $q$. Let $l_\Gamma : \Omega \rightarrow ]0,+\infty[$ be the function that associates to a point $p\in \Omega$, the $q$-length of the leaf containing $p$. Then, the modulus of $\Gamma$ is
\[ M_4 (\Gamma) = \int_\Omega \frac{|q|^2}{(l_\Gamma) ^4} \mathrm{d} L^3.\]

\section*{Introduction and statement of results}

Holomorphic quadratic differentials on Riemann surfaces are well-studied objects that play a central role in Teichm\"uller theory; in particular in Teichm\"uller's theorem (see \cite{Ahl ,Hub, IT}) and because they are in one-to-one correspondance with measured foliation \cite{HM}. It leads us to curve families and especially to the modulus (or its reciprocal, the extremal length) of a curve family which is a classical conformal invariant studied since the work of Gr\"otzsch and his famous length-area argument. The interplay between holomorphic quadratic differentials and moduli (or extremal lengths) of curve families was then naturally considered (see \cite{Jen, Str, Iva, Ker}). The purpose of this work is to study the interplay between quadratic differentials and moduli of curve families in the setting of $3$-dimensional spherical CR geometry. 

A holomorphic quadratic differential on a Riemann surface $X$ is a holomorphic section of the line bundle $K(X)\otimes K(X)$ where $K(X)$ is the canonical bundle of $X$. In a local holomorphic coordinate $z$, a quadratic differential writes as $q(z)\mathrm{d}z^2$ for a holomorphic function $q$ of $z$. A holomorphic quadratic differential naturally gives rise to a flat singular metric on $X$ locally given by $\sqrt{|q(z)|} |\mathrm{d}z|$. Thus, the following were defined.
\begin{itemize}
\item The $q$-length of a curve $\gamma$ is
\[ l_q (\gamma) = \int_\gamma \sqrt{|q|}.\]
\item The $q$-area of $X$ is
\[ \mathrm{Area}_q (X) = \int_X |q|.\]
\end{itemize}
We emphase that the holomorphicity of $q$ is not mandatory to define these notions. 

Associated to a holomorphic quadratic differential are two (measured) singular foliations said {\it horizontal} and {\it vertical}: a curve $\gamma$ on $X$ is called a {\it horizontal trajectory} of $q$ if $q(\gamma ') >0$ and a {\it vertical trajectory} of $q$ if $q(\gamma ') <0$. What lies behind the terminology is that $q$, away from a zero, defines a local holomorphic coordinate (called {\it natural}) $w = x+iy$ such that $q = \mathrm{d} w^2$ and $\gamma$ is a horizontal (resp. vertical) trajectory of $q$ if and only if $y(\gamma)$ (resp. $x(\gamma)$) is constant. The computation of the extremal lengths of foliations has been done in a slightly different form, see for instance the remark \cite[p.~34]{Ker} that extends a work a Jenkins \cite{Jen} and Strebel \cite{Str} (see also \cite[Theorem 3.1]{Ker}). For a precise statement of Kerckhoff's remark, see \cite[Theorem 2.21]{PT} and for a detailed proof see \cite[section 1.2]{Iva}. The precise result we will be interested in states as follow (refer to section \ref{sec1.2} for the definition of the modulus of a curve family):

\begin{theo}[Theorem \ref{th1}]
Let $D$ be a domain in $\C$ foliated by a family $\Gamma$ of horizontal trajectories of a holomorphic quadratic differential $q$ on $D$. Let $l_\Gamma : D \rightarrow ]0,+\infty[$ be the function that associates to a point $z\in D$, the $q$-length of the leaf containing $z$. Then the modulus of $\Gamma$ is
\[M_2 (\Gamma) = \int_D \frac{|q|}{(l_\Gamma)^2}\mathrm{d}L^2.\]
\end{theo}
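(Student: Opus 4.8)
The plan is to establish the two matching inequalities $M_2(\Gamma)\le I$ and $M_2(\Gamma)\ge I$, where $I = \int_D \frac{|q|}{(l_\Gamma)^2}\mathrm{d}L^2$, the first by exhibiting an explicit admissible metric and the second by a length-area argument. For the upper bound, I would guess the extremal metric to be
\[ \rho_0 = \frac{\sqrt{|q|}}{l_\Gamma}. \]
Since $l_\Gamma$ is, by definition, constant along each leaf $\gamma$ (equal to $l_q(\gamma)$), the $\rho_0$-length of any leaf is $\int_\gamma \rho_0\,|\mathrm{d}z| = \frac{1}{l_q(\gamma)}\int_\gamma \sqrt{|q|}\,|\mathrm{d}z| = \frac{l_q(\gamma)}{l_q(\gamma)}=1$, so $\rho_0$ is admissible. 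Its energy is exactly $\int_D \rho_0^2\,\mathrm{d}L^2 = I$, which yields $M_2(\Gamma)\le I$.

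For the lower bound, I would pass to the natural coordinate $w = x+iy$ of $q$, in which $q = \mathrm{d}w^2$, the leaves of $\Gamma$ become the horizontal segments $\{y = \text{const}\}$, the $q$-length element is $|\mathrm{d}w|$, and the area form becomes $\mathrm{d}L^2_w = |q|\,\mathrm{d}L^2$. Given any admissible $\rho$, I set $\tilde\rho = \rho/\sqrt{|q|}$ so that $\rho\,|\mathrm{d}z| = \tilde\rho\,|\mathrm{d}w|$; admissibility then reads $\int_{\gamma_y}\tilde\rho\,\mathrm{d}x \ge 1$ for the leaf $\gamma_y$ at height $y$, and the energy is preserved: $\int \rho^2\,\mathrm{d}L^2 = \int \tilde\rho^2\,\mathrm{d}L^2_w$. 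Writing $L(y)$ for the $q$-length of $\gamma_y$ (so $L(y) = l_\Gamma$ on that leaf), Cauchy--Schwarz on each leaf gives
\[ 1 \le \Big(\int_{\gamma_y}\tilde\rho\,\mathrm{d}x\Big)^2 \le L(y)\int_{\gamma_y}\tilde\rho^2\,\mathrm{d}x, \]
hence $\int_{\gamma_y}\tilde\rho^2\,\mathrm{d}x \ge 1/L(y)$. Integrating in $y$ and applying Fubini,
\[ \int \tilde\rho^2\,\mathrm{d}L^2_w \ge \int \frac{\mathrm{d}y}{L(y)} = \int \frac{1}{L(y)^2}\,\mathrm{d}L^2_w, \]
which, transported back, is precisely $I$. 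Equality holds for $\tilde\rho = 1/L(y)$, that is for $\rho_0$, confirming its extremality.

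The hard part will be to globalize this local picture rigorously. The natural coordinate exists only away from the zeros of $q$ and is multivalued, since the branch of $\sqrt{q}$ carries monodromy, so $w$ is not defined on all of $D$; nonetheless the $1$-form $|\mathrm{d}w| = \sqrt{|q|}\,|\mathrm{d}z|$, the transverse measure $|\im(\sqrt{q}\,\mathrm{d}z)|$, and the area $|q|\,\mathrm{d}L^2$ are all globally well defined (each being invariant under $\sqrt{q}\mapsto -\sqrt{q}$), and the zero set of $q$ is discrete, hence of measure zero. The key technical step is therefore to justify the disintegration of Lebesgue measure on $D$ as (transverse measure) $\times$ (arc-length along leaves) with respect to the foliation $\Gamma$ --- the rigorous form of the Fubini step above --- and to verify that the Cauchy--Schwarz estimate may be carried out leaf-by-leaf on the full-measure set where the foliation is regular. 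Once this measure-theoretic decomposition is in place, the two bounds coincide and give $M_2(\Gamma) = I$.
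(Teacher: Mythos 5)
Your candidate extremal density and the upper bound are exactly as in the paper: since $l_\Gamma$ is by definition constant on each leaf, $\rho_0=\sqrt{|q|}/l_\Gamma$ is admissible, and its energy is by inspection the integral $I=\int_D |q|\,(l_\Gamma)^{-2}\,\mathrm{d}L^2$, so $M_2(\Gamma)\le I$. The gap is in the lower bound. You reduce it to what you yourself call ``the key technical step'' --- the disintegration of $|q|\,\mathrm{d}L^2$ into arc-length along the leaves times a transverse measure --- and you do not prove that step. But that disintegration \emph{is} the mathematical content of the theorem: granting it, Cauchy--Schwarz and Fubini are routine, and the paper's proof of Theorem \ref{th1} consists precisely of one lemma establishing this decomposition (Lemma \ref{lemma1}: $(|q|\circ\Phi)\,\mathrm{J}_\Phi=\sqrt{(|q|\circ\Phi)}\,|\partial_s\Phi|\,\lambda$ with $\lambda$ nowhere vanishing and depending on the leaf parameter alone) followed by the same routine estimates you write. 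So your proposal assumes exactly the point at issue; this is a genuine gap, not a presentational one. Note also that the two obstructions you single out are not the real difficulty in the setting at hand: zeros of $q$ cannot occur at all, since every point of $D$ lies on a leaf $\gamma$ with $q(\gamma')>0$; and in the precise form of the statement (Theorem \ref{th1}) the foliation is given by a global $C^2$-diffeomorphism $\Phi:I\times J\to U$, so $U$ is simply connected, $\sqrt{q}$ admits a global single-valued branch, and there is no monodromy.

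Where the paper differs from your plan --- deliberately, as explained in its introduction --- is that it proves the disintegration \emph{without} natural coordinates, using only the equation $\partial_{\overline w}q=0$, because no analogue of natural coordinates is available in the Heisenberg-group situation (Theorem \ref{th2}) to which this proof is designed to generalize. Concretely, setting $\mu=\sqrt{(q\circ\Phi)(\partial_s\Phi)^2}>0$, holomorphicity yields $\partial_s\bigl(\mu\,\partial_p\Phi/\partial_s\Phi\bigr)=\partial_p\mu\in\R$, hence $\lambda(p):=\Im\bigl(\mu\,\partial_p\Phi/\partial_s\Phi\bigr)$ is independent of $s$, and $\mu\,\mathrm{J}_\Phi=|\partial_s\Phi|^2\lambda$ is the desired decomposition. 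This is the coordinate-free form of your missing step: since $\mu\,\partial_p\Phi/\partial_s\Phi=\sqrt{q\circ\Phi}\,\partial_p\Phi$, the function $\lambda$ is exactly the $p$-derivative of your transverse coordinate $y=\Im\int\sqrt{q}\,\mathrm{d}z$, and the lemma says that $y$ descends to the leaf space with nonvanishing derivative. If you prefer to finish along your own lines, that is also the shortest route: fix the branch of $\sqrt{q}$ with $\sqrt{q}(\Phi)\,\partial_s\Phi>0$, let $w$ be a primitive of $\sqrt{q}\,\mathrm{d}z$ on the simply connected $U$, check that $\Im(w\circ\Phi)$ depends only on $p$ and is strictly monotone in $p$ (this uses invertibility of $\mathrm{d}\Phi$), conclude that $w$ is injective on $U$ and conjugates the foliation to horizontal segments in $w(U)$, and then your Cauchy--Schwarz/Fubini computation runs verbatim. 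Those verifications are what is missing, and carrying them out amounts to re-proving Lemma \ref{lemma1}.
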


Let us point out that the computation of extremal length (\cite {Ker}) is usually done using natural coordinates and that we don't have access to those in the setting of spherical CR geometry. Thus, we need to find another strategy to prove our result. This is why
an elementary proof (because of regularity assumptions) of Theorem 1 will be given in section \ref{sec2.1}. It relies on a decomposition of the $q$-area (see Lemma \ref{lemma1}). As stated, the purpose of this paper is to prove an analog result in the setting of ($3$-dimensional) spherical CR geometry. 

\subsubsection*{CR manifolds}
For completeness, we give (spherical/strictly pseudoconvex) CR manifolds a pretty long introduction. The interested reader can learn more about CR manifolds for instance in \cite{Jac, Bog}. The conventions used in those books are not the same as here. In \cite{Jac}, the CR structure $V$ is used as our $\overline V$ (that is its sections serve as Cauchy-Riemann operator whereas here we understand sections of $\overline V$ as Cauchy-Riemann operators). In \cite{Bog}, CR manifolds are defined without specifying the rank of the CR structure and thus, what we call CR manifolds here are called CR manifolds of hypersurface type there. 

A {\it CR manifold} is a manifold $M^{2n+1}$ endowed with (complex) rank $n$ subbundle $V$ of the complexified tangent bundle $\C TM$ satisfying $V\cap \overline V = \{0\}$ and $[V,V] \subset V$. $V$ is called the {\it CR structure} on $M$. Equivalently (though we won't use this point of view here), a CR manifold is a $2n+1$ manifold $M$ endowed with a (real) rank $2n$ distribution $\p$ and a complex structure $J$ on $\p$.
\begin{remark}\label{remark0.0.1}
\begin{itemize}
\item Notice that for a $3$-dimensional manifold, the condition $[V,V] \subset V$ is always fulfilled. 
\item Some authors omit the rank $n$ dimension of the subbundle $V$ in the definition of a CR manifold and call a manifold endowed with a rank $n$ CR structure a CR manifold of {\it hypersurface type}. 
\item Every hypersurface $M$ of $\C^{n+1}$ inherits a CR structure from the complex structure on $\C^{n+1}$ simply by taking $V = \C TM \cap T^{1,0} \C ^{n+1}$. In particular, the unit sphere $S^{2n+1}$ in $\C^{n+1}$ has such a CR structure said {\it standard}.
\end{itemize}
\end{remark}
When a new structure is introduced, we usually like to understand the maps preserving this structure. A map $f : (M, V) \rightarrow (M', V')$ between CR manifold is called a {\it CR map} if $f_\ast (V) \subset V'$. A function $f: (M,V) \rightarrow \C^{k}$ is called a {\it CR function} if $f_\ast (V) \subset T^{1,0}\C^{k}$. A {\it CR diffeomorphism} between CR manifolds is a CR map which is also a diffeomorphism. Two CR manifolds are called equivalent (or CR equivalent) if there is a CR diffeomorphism from one to the other.

 A CR manifold $(M,V)$ is called {\it strictly pseudoconvex} if $V\oplus \overline V$ is a (complex) contact structure on $M$ (that is, locally $V\oplus \overline V = \ker (\omega)$ for a $1$-form $\omega$ such that $\omega \wedge (\mathrm{d} \omega)^n$ is nowhere zero). 
Using Rumin complex \cite{Rum}, one can define quadratic differentials on strictly pseudoconvex CR $3$-manifolds \cite{Tim2} and recover the notions of trajectories, $q$-length and $q$-volume. For completeness, we give a short overview about them in section \ref{sec1.1}.

\subsubsection*{Spherical CR manifolds and the Heisenberg group}
A ($3$-dimensional) spherical CR manifold \cite{BS} is a manifold locally modeled on the unit sphere $S^3 \subset \C ^2$ endowed with its standard CR structure. Equivalently (because of a Liouville type theorem: a CR diffeomorphism between open subsets of $S^3$ is the restriction of the action of an element of $\mathrm{PU}(2,1)$ on $S^3$; see \cite[p.~224]{BS} for instance), it is $(G,X)$-manifold for $X=S^3$ and $G = \mathrm{PU}(2,1)$. Another local model for a spherical CR manifold is the Heisenberg group $\h$ which is $\C \times \R$ endowed with the group law
\[(z,t)(z',t') = (z+z', t+t' + 2 \Im (z\overline z')).\]
As for its CR structure, it is given by $V_\h = span(Z)$ where $Z$ is the complex vector field
\[ Z = \dz +i\overline z \dt.\]
It is easy to see that $V_\h \oplus \overline V_\h = \ker (\omega)$ with $\omega = \mathrm{d}t-i\overline z \mathrm{d}z + iz\mathrm{d}\overline z$. Since $\omega$ is a contact form, $(\h, V_\h)$ is a strictly pseudoconvex CR manifold and it is known to be equivalent to the unit sphere minus a point endowed with its standard CR structure. Thus, one can see a spherical CR manifold as locally modeled on $(\h, V_\h)$, that is a manifold endowed with an atlas of charts (called a {\it spherical CR atlas}) $(U_i , \varphi_i : U_i \rightarrow U_i ' \subset \h)$ such that every $\varphi_j \circ \varphi_i ^{-1}$ is a CR diffeomorphism.

Moreover, a function $f : U\subset \h \rightarrow \C$ is CR if and only if $\overline Z f = 0$ and it is easy to see that a map $g=(g_1, g_2) : U \rightarrow V$ between open subsets of $\h$ is CR if and only if both $g_1$ and $g_2 + i|g_1|^2$ are CR functions. With this angle several differential operators can be defined on quadratic differentials on spherical CR manifolds and especially one: $B_2$ that remained quite mysterious in \cite{Tim2}. The main result of this paper gives a geometric interpretation of quadratic differentials in the kernel of this $B_2$, this is the result announced in the abstract (refer to section \ref{sec1.2} for the modulus of a family of a legendrian curves in the Heisenberg group).

\begin{theo}[Theorem \ref{th2}]
Let $\Omega$ be a domain in the Heisenberg group foliated by a family $\Gamma$ of legendrian (everywhere tangent to the contact distribution) curves. Assume that there is a quadratic differential $q$ on $\Omega$ such that $B_2 q = 0$ and every curve in $\Gamma$ is a horizontal trajectory for $q$. Let $l_\Gamma : \Omega \rightarrow ]0,+\infty[$ be the function that associates to a point $p\in \Omega$, the $q$-length of the leaf containing $p$.Then, the modulus of $\Gamma$ is
\[ M_4 (\Gamma) = \int_\Omega \frac{|q|^2}{(l_\Gamma) ^4} \mathrm{d} L^3.\]
\end{theo}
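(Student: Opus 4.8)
The plan is to follow the strategy of Theorem~\ref{th1}, transported to the sub-Riemannian setting, replacing the $q$-area by the appropriate $q$-volume. Writing the modulus as $M_4(\Gamma) = \inf_{\rho\in\adm(\Gamma)} \int_\Omega \rho^4\, \mathrm{d}L^3$, where a Borel density $\rho\geq 0$ is admissible when $\int_\gamma \rho\, \mathrm{d}s \geq 1$ for every leaf $\gamma\in\Gamma$ ($\mathrm{d}s$ the Carnot--Carath\'eodory arclength, which for a legendrian curve is the length of its horizontal projection to $\C$), I would prove two matching bounds with extremal density
\[ \rho_0 = \frac{\sqrt{|q|}}{l_\Gamma}. \]
Since the homogeneous dimension of $\h$ is $4$, the natural $q$-volume is $\mathrm{d}\mathrm{Vol}_q = |q|^2\, \mathrm{d}L^3$ (a $q$-length scales like $|q|^{1/2}$ times euclidean length, hence a $q$-volume like $|q|^{2}$ times $\mathrm{d}L^3$), so that the claimed identity reads $M_4(\Gamma) = \int_\Omega l_\Gamma^{-4}\, \mathrm{d}\mathrm{Vol}_q$, exactly parallel to Theorem~\ref{th1}.

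The upper bound is the easy half. Along any leaf $\gamma$ the function $l_\Gamma$ is constant and equal to the $q$-length $l_q(\gamma) = \int_\gamma \sqrt{|q|}$, whence
\[ \int_\gamma \rho_0\, \mathrm{d}s = \frac{1}{l_\Gamma}\int_\gamma \sqrt{|q|}\, \mathrm{d}s = \frac{l_q(\gamma)}{l_\Gamma} = 1, \]
so $\rho_0\in\adm(\Gamma)$; and since $\rho_0^4 = |q|^2/l_\Gamma^4$ we get $M_4(\Gamma)\leq \int_\Omega \frac{|q|^2}{l_\Gamma^4}\, \mathrm{d}L^3$.

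For the lower bound I would first establish a decomposition of the $q$-volume adapted to the foliation, the Heisenberg analogue of Lemma~\ref{lemma1}, of the form
\[ |q|^2\, \mathrm{d}L^3 = \sqrt{|q|}\, \mathrm{d}s \otimes \mathrm{d}\nu_q, \]
where $\sqrt{|q|}\, \mathrm{d}s$ is the $q$-arclength along the leaves and $\mathrm{d}\nu_q$ is a transverse measure (of homogeneous degree $3$) invariant under the holonomy of the foliation. Granting this, set $\phi = \rho/\sqrt{|q|}$; then for admissible $\rho$, Fubini along the decomposition together with Jensen's inequality applied on each leaf to the probability measure $l_\Gamma^{-1}\sqrt{|q|}\,\mathrm{d}s$ and the convex map $x\mapsto x^4$ give
\begin{align*}
\int_\Omega \rho^4\, \mathrm{d}L^3 &= \int \Big( \int_\gamma \phi^4\, \sqrt{|q|}\, \mathrm{d}s\Big)\,\mathrm{d}\nu_q \\
&\geq \int l_\Gamma \Big(\frac{1}{l_\Gamma}\int_\gamma \phi\, \sqrt{|q|}\, \mathrm{d}s\Big)^4 \mathrm{d}\nu_q \\
&= \int \frac{1}{l_\Gamma^3}\Big(\int_\gamma \rho\, \mathrm{d}s\Big)^4\mathrm{d}\nu_q \;\geq\; \int \frac{\mathrm{d}\nu_q}{l_\Gamma^3},
\end{align*}
the middle equality using $\phi\sqrt{|q|}=\rho$ and the last inequality using admissibility. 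Unwinding the same decomposition shows $\int l_\Gamma^{-3}\,\mathrm{d}\nu_q = \int_\Omega \frac{|q|^2}{l_\Gamma^4}\, \mathrm{d}L^3$, which completes the lower bound; equality holds throughout for $\rho_0$, since then $\phi\equiv l_\Gamma^{-1}$ is constant along each leaf and Jensen's inequality is an equality.

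The heart of the matter, and the step I expect to be the main obstacle, is the $q$-volume decomposition. The difficulty is precisely the one flagged in the introduction: we have no natural coordinates in which $q = \mathrm{d}w^2$, so the product structure cannot simply be read off. What must be shown is that the horizontal foliation of $q$ is a \emph{measured} foliation, that is, that the transverse factor $\mathrm{d}\nu_q$ obtained by disintegrating $|q|^2\,\mathrm{d}L^3$ against the $q$-arclength is holonomy-invariant (independent of the point at which one slices a given leaf). This is exactly where the hypothesis $B_2 q = 0$ must be used: it should be the infinitesimal form of this holonomy-invariance, playing the role that holomorphicity of $q$ plays classically. Concretely, I would build an adapted coframe $(\sqrt{|q|}\,\mathrm{d}s,\alpha,\beta)$ out of $q$ and the contact form $\omega$, with $\alpha$ the transverse horizontal direction and $\beta$ the Reeb (vertical, degree $2$) direction, so that $\mathrm{d}\nu_q = |q|^{3/2}\,\alpha\wedge\beta$, and then compute that $B_2 q = 0$ forces the Lie derivative of $\mathrm{d}\nu_q$ along the leaf direction to vanish, yielding the required factorization.
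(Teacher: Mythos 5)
Your proposal has the same architecture as the paper's proof: the same extremal density $\rho_0 = \sqrt{|q|}/l_\Gamma$, the same easy upper bound via admissibility of $\rho_0$, and a lower bound resting entirely on a disintegration of the $q$-volume along the foliation. That disintegration is exactly the paper's Lemma \ref{lemma2}: in a parametrization $\Phi : I\times\Lambda\to\Omega$ whose curves $\Phi(\cdot,p)$ are the leaves, one has $(|q|^2\circ\Phi)\,\mathrm{J}_\Phi = \sqrt{|q|\circ\Phi}\;|\partial_s\Phi_1|\,\lambda$ with $\lambda$ a nowhere vanishing function of $p$ alone. Your Jensen step (probability measure $l_\Gamma^{-1}\sqrt{|q|}\,\mathrm{d}s$ on each leaf, convex map $x\mapsto x^4$) is an equivalent repackaging of the paper's H\"older inequality with exponents $4$ and $4/3$, so the surrounding argument is fine.

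The genuine gap is that the disintegration itself is never proved, and it is the entire content of the theorem: it is the only place where the hypotheses $B_2q=0$ and legendrian leaves are used. You correctly flag it as the main obstacle, but what you offer --- build an adapted coframe and ``compute that $B_2 q = 0$ forces the Lie derivative of $\mathrm{d}\nu_q$ along the leaf direction to vanish'' --- is a restatement of what must be shown, not a verification. In the paper this step is a delicate explicit computation: the legendrian condition $\partial_s\Phi_2 = -2\Im(\overline{\Phi_1}\,\partial_s\Phi_1)$ yields the Jacobian formula $\mathrm{J}_\Phi = -\Im(\overline{\partial_s\Phi_1}\,A)$ with $A = \partial_{p_1}\Phi_2\,\partial_{p_2}\Phi_1 - \partial_{p_2}\Phi_2\,\partial_{p_1}\Phi_1 + 2\Phi_1\Im(\partial_{p_1}\Phi_1\,\overline{\partial_{p_2}\Phi_1})$; one then writes $q = \bigl(\mu(\Phi_1^{-1})/\partial_s\Phi_1(\Phi^{-1})\bigr)^2$ with $\mu = \sqrt{(q\circ\Phi)(\partial_s\Phi_1)^2}$ (positive precisely because the leaves are horizontal trajectories), expands the scalar equation $\overline Z(|q|^2)+\overline q\,\overline Z q=0$ using the inverse-function formulas for $\overline Z\Phi_j^{-1}$ (which again involve $A$ and $\mathrm{J}_\Phi$), and checks after substantial cancellation that $\Im(\partial_s B)=0$ for $B = \mu^3A/(\partial_s\Phi_1\,|\partial_s\Phi_1|^2)$, i.e.\ that the transverse density is independent of $s$. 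None of this is formal: a priori $B_2q=0$ is one complex scalar equation, and its identification with $s$-invariance of the transverse factor goes through the specific interplay of the contact structure (through $A$), horizontality (through $\mu$ real), and the legendrian constraint --- an interplay your coframe sketch does not locate. As written, your argument establishes only $M_4(\Gamma)\le\int_\Omega |q|^2\, l_\Gamma^{-4}\,\mathrm{d}L^3$; the reverse inequality, which is the substance of the theorem, remains unproved.
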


If in addition we ask the leaves to have the same $q$-length $C>0$, then the modulus expresses with the $q$-volume (denoted $\mathrm{Vol}_q$, see Definition \ref{defvol}) and $C$ (see Corollary \ref{coro2}):
\[M_4 (\Gamma) = \frac{1}{C^4} \mathrm{Vol}_q(\Omega).\]
As for the proof given here of the result in the complex plane, the proof of this theorem relies on a decomposition of the $q$-volume. It is for this decomposition that we need the quadratic differential to be in the kernel of $B_2$.

The paper is organized as follow. The first section begins with a short overview about quadratic differentials in spherical CR geometry and continues with moduli of curve families in the complex plane and in the Heisenberg group. In the second section, we prove theorems 1 and 2 and we give some examples and counter examples) of application of theorem 2.

\section{Quadratic differentials and moduli of curve families}
\subsection{Quadratic differentials on strictly pseudoconvex CR manifolds}\label{sec1.1}
We first give a short overview about quadratic differentials on a CR manifold. Most of this section can be found in \cite{Tim2}.

Let $(M,V)$ be a $3$-dimensional strictly pseudoconvex CR manifold. For a canonical bundle over $M$ we want to consider $V^\ast$. In order to have natural operators on it, we would want to see $V^\ast$ as a subbundle of $\C T^\ast M$. However, this implies a choice of a complement of $V\oplus \overline V$. In order to avoid that choice, we define a line bundle $\wedge^{1,0}$ as follow.

Let $B^{1,0}$ be the subbundle of $\C T^\ast M$ of forms vanishing on $\overline V$ and let $A$ be the subbundle of forms vanishing on $\p = V\oplus \overline V$. Then, we define
\[ \wedge ^{1,0} = \faktor{B^{1,0}}{A}\]
which is a line bundle naturally isomorphic to $V^\ast$.

\begin{definition}[Quadratic differential]
A quadratic differential on a strictly pseudoconvex CR manifold is a smooth section of the line bundle $\wedge^{1,0} \otimes \wedge^{1,0}$.
\end{definition}

The first geometric data induced by a quadratic differential on a strictly pseudoconvex manifold can be defined by analogy with the Riemann surface case. However, since quadratic differentials are defined up to a contact form, we shall restrict to {\it legendrian curves} (that is curves everywhere tangent to the contact distribution).

\begin{definition}[Trajectories, length, volume]\label{defvol} 
Let $q$ be a quadratic differential on a $3$-dimensional strictly pseudoconvex CR manifold $(M,V)$. Let also $\gamma : I \rightarrow M$ be a legendrian parametrized curve on $M$
\begin{enumerate}
\item $\gamma$ is called a
\begin{itemize}
\item horizontal trajectory of $q$ if $q(\gamma '(s)) > 0$ for all $s \in I$;
\item vertical trajectory of $q$ if $q(\gamma ' (s)) < 0$ for all $s\in I$.
\end{itemize}

\item $\sqrt{|q|}$ is a length element that can be integrated along any legendrian curve. Thus, we call the $q$-length of the curve $\gamma$, the number
\[ l_q (\gamma) = \int_\gamma \sqrt{|q|}.\]

\item The $q$-volume of $M$ is
\[ \mathrm{Vol}_q (M) = \int_M |q|^2.\]

\end{enumerate}
\end{definition}

Three differential operators on quadratic differentials on spherical CR manifolds that allowed to obtain analogue of half-translation structures were defined in \cite{Tim2}. First, let us emphase that on a spherical CR manifold, one can understand quadratic differentials in terms of cocycle relations. Namely, assume that the manifold $M$ is now endowed with a spherical CR atlas $(U_i, \varphi_i : U_i \rightarrow U_i ' \subset \h)_{i\in I}$. On every open subset of $\h$, a quadratic differential is the class modulo the contact form of a form of type $f\mathrm{d}z^2$. We write such classes $[f\mathrm{d}z^2]_\omega$. Then, if $q$ is a quadratic differential on $M$, in every chart $(U_i,\varphi_i)$ we have
\[q =\varphi_i ^\ast[q_i\mathrm{d}z^2]_\omega\]
with $q_i \in C^\infty(U_i ', \C)$. Since $q$ is globally defined on $M$, if $U_i\cap U_j \neq \emptyset$ we must have
\[ [q_i\mathrm{d}z^2]_\omega = g_{j,i}^\ast [q_j\mathrm{d}z^2]_\omega \text{ on $\varphi_i(U_i\cap U_j)$ where $g_{j,i} = \varphi_j\circ\varphi_i^{-1}$.}\]
That is
\[[q_i\mathrm{d}z^2]_\omega = [(q_j\circ g_{j,i})\left(Zg_{j,i}^1\right)^2\mathrm{d}z^2]_\omega \text{ on $\varphi_i(U_i\cap U_j)$} \]
where $g_{j,i} = \varphi_j\circ\varphi_i^{-1}$  and $g_{j,i} = (g_{j,i}^1, g_{j,i}^2)$. Which legitimates the following definition.

\begin{definition}[Quadratic differential on a spherical CR manifold] 
A quadratic differential on a spherical CR manifold $M$ with spherical CR atlas $(U_i, \varphi_i)_{i\in I}$ is a collection of smooth complex valued functions $q_i \in C^{\infty}(\varphi_i (U_i))$ satisfying for every $i,j \in I$
\[q_i = (q_j\circ g_{j,i})\left(Zg_{j,i}^1\right)^2 \text{ on $\varphi_i(U_i\cap U_j)$ where $g_{j,i} = \varphi_j\circ\varphi_i^{-1}$.}\]
\end{definition}

We can now define operators on quadratic differentials on $\h$ and "lift" them using charts on the manifold. On $\h$, the operators are the following: let $q$ be a quadratic differential defined in an open subset $U$ of $\h$
\[ \Dr '_{2,\h} [q\mathrm{d}z^2]_\omega = \left(2qZ\overline Z q - Zq\overline Zq -4iqTq\right)[\mathrm{d}z^3]_\omega \otimes \omega\wedge\mathrm{d}z \text{,}\]
\[\Dr ''_{2,\h} [q\mathrm{d}z^2]_\omega = \left(2q\overline Z ^2q - \left(\overline Z q\right)^2\right)[\mathrm{d}z^3]_\omega \otimes \omega\wedge \mathrm{d}\overline z \text{ and}\]
\[B_{2,\h} [q\mathrm{d}z^2]_\omega = \left(\overline Z\left(|q|^2\right) + \overline q \overline Z q\right)\omega\wedge\mathrm{d}\overline z \otimes \omega.\]
\begin{remark}
\begin{itemize}
\item The two first operators are extensions to quadratic differentials of a splitting by Garfield and Lee \cite{GL} of Rumin operator \cite{Rum} on $(1,0)$-forms. A quadratic differential $q$ annihilating these two operators is locally, away from a zero, the square of the differential of a complex valued CR function. To be precise, if $q$ is a quadratic differential in the kernels of both $\mathrm{D}_{2, \h} '$ and $\mathrm{D}_{2, \h} ''$, then, locally, away from a zero, $q=[(Zf)^2 \mathrm{d} z^2]_\omega$ for a complex valued CR function $f$ (see \cite[Proposition 2.3.4]{Tim2}).

\item The operator $B_{2,\h}$ appeared when the following question was asked: for a complex valued CR function $f$, how can we ensure that (locally) there is a real valued function $h$ such that $(f,h)$ is a CR map ? That is, under what condition does the equation $\overline Z (h + i|f|^2)=0$ have a real valued solution $h$ ? It turns out that, assuming $f$ CR, $[(Zf)^2\mathrm{d}z^2]_\omega$ being in the kernel of $B_2$ is a necessary and sufficient condition for the equation $\overline Z (h + i|f|^2)=0$ to have a real valued solution. For details, refer to \cite{Tim2} right before Definition 2.3.5.

Thus, if a quadratic differential $q$ annihilates $\mathrm{D}_{2, \h} '$, $\mathrm{D}_{2, \h} ''$ and $B_{2,\h}$ then $q$ is locally, away from a zero, of the form $g^\ast[\mathrm{d}z^2]_\omega$ for a CR map $g$ (see \cite{Tim2} and Proposition \ref{prop105} below). 
\end{itemize}
\end{remark}
In order to define the new line bundles involved here, first denote $\wedge^2$ the subbundle of $2$-forms vanishing on $\p^2$. Now, define $\wedge^{1,1}$ the subbundle of forms in $\wedge^2$ with vanishing interior product with every vector of $V$ and $\wedge^{2,0}$ the subbundle of forms in $\wedge^2$ with vanishing interior product with every vector of $\overline V$. Then, 

\begin{definition}\label{def104}
Let $M$ be a spherical CR manifold $M$ with atlas $(U_i, \varphi_i)_i$. Then the following operators are well-defined (where $\Gamma (E)$ refers to the space of smooth sections of any line bundle $E$ over $M$):
\[\begin{array}{cccc}
\Dr_2 ' : & \Gamma\left(\left(\wedge^{1,0}\right)^{\otimes 2}\right) & \longrightarrow & \Gamma\left(\left(\wedge^{1,0}\right)^{\otimes 3} \otimes \wedge^{2,0}\right) \\
& q & \longmapsto & \varphi_i^\ast\Dr'_{2,\h}[q_i\mathrm{d}z^2]_\omega  \text{ on $U_i$ where $q=\varphi_i ^\ast[q_i\mathrm{d}z^2]_\omega$,}
\end{array}\]
\[\begin{array}{cccc}
\Dr_2 '' : & \Gamma\left(\left(\wedge^{1,0}\right)^{\otimes 2}\right) & \longrightarrow & \Gamma\left(\left(\wedge^{1,0}\right)^{\otimes 3} \otimes \wedge^{1,1}\right)  \\
& q & \longmapsto & \varphi_i^\ast\Dr''_{2,\h}[q_i\mathrm{d}z^2]_\omega  \text{ on $U_i$ where $q=\varphi_i ^\ast[q_i\mathrm{d}z^2]_\omega$ and}
\end{array}\]
\[\begin{array}{cccc}
B_2  : & \Gamma\left(\left(\wedge^{1,0}\right)^{\otimes 2}\right) & \longrightarrow & \Gamma\left(\wedge^{1,1} \otimes A\right)  \\
& q & \longmapsto & \varphi_i^\ast B_{2,\h}[q_i\mathrm{d}z^2]_\omega  \text{ on $U_i$ where $q=\varphi_i ^\ast[q_i\mathrm{d}z^2]_\omega$.}
\end{array}\]
\end{definition}

\begin{example}
\begin{itemize}
\item For an example of a quadratic differential annihilating $\mathrm{D}_2 '$ and $\mathrm{D}_2 ''$, as we said just take the square of the differential of a CR function. For instance, we will discuss later about the trajectories of the quadratic differential $\left[\frac{\overline z ^2}{(t+i|z|^2)^2} \mathrm{d}z^2\right]_\omega$ on $\h$.
\item $\left[\frac{\overline z ^2 (t^2+|z|^4)^{\frac{2}{3}}}{|z|^{\frac{8}{3}}(t+i|z|^2)^2} \mathrm{d}z^2\right]_\omega$ is an example of a quadratic differential on $\h \backslash \{0\}\times \R$ in the kernel of $B_2$. We will be interested in it again in section \ref{sec2}
\item Finally, $\left[\frac{(t-i|z|^2)^2}{(t+i|z|^2)^4} \mathrm{d}z^2\right]_\omega$ is a quadratic differential on $\h \backslash \{0\}$ annihilating $\mathrm{D}_2'$, $\mathrm{D}_2 ''$ and $B_2$
\end{itemize}

\end{example}

Those operators were defined in order to obtain the following result, giving a structure on a spherical CR manifold analog to a half-translation structure.

\begin{proposition}[Proposition 2.3.1 in \cite{Tim2}]\label{prop105}
Let $q$ be a non-zero quadratic differential on a spherical CR manifold $M$ such that $\Dr' _2 q = 0$, $\Dr''_2 q=0$ and $B_2 q =0$. Then, around every point $p\in M$ where $q(p)\neq 0$, there is a chart $(U,\varphi)$ such that $q=\varphi^\ast [\mathrm{d}z^2]_\omega$ on $U$. 

Moreover, two such charts differ only by translation and sign, that is by $(z,t) \mapsto \left(\pm z+z_0, t+t_0 + 2\Im(\pm z \overline z_0)\right)$ which is an analogue situation as half-translation structures.
\end{proposition}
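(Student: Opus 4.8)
The plan is to work in a single Heisenberg chart and assemble the two remarks following Definition \ref{def104}. Fix $p$ with $q(p)\neq 0$ and a chart $\varphi_i:U_i\to U_i'\subset\h$ around $p$, so that $q=\varphi_i^\ast[q_i\mathrm{d}z^2]_\omega$ with $q_i$ smooth and $q_i\neq 0$ on a neighborhood $W$ of $\varphi_i(p)$. Since $\Dr_2' q=0$ and $\Dr_2'' q=0$, the first remark (that is \cite[Proposition 2.3.4]{Tim2}) lets me write, after shrinking $W$, $q_i=(Zf)^2$ for a complex-valued CR function $f$ on $W$, and $Zf(\varphi_i(p))\neq 0$. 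Because $B_2 q=0$, the hypothesis of the second remark (the discussion of \cite{Tim2} preceding its Definition 2.3.5) is met, so the equation $\overline Z\left(h+i|f|^2\right)=0$ admits a real-valued solution $h$ near $\varphi_i(p)$. Setting $g=(f,h)$, the function $g_1=f$ is CR and $g_2+i|g_1|^2=h+i|f|^2$ is CR, so by the characterization of CR maps on $\h$ recalled in the introduction, $g$ is a CR map of an open subset of $\h$ into $\h$.

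It remains to see that $g$ is a local CR diffeomorphism at $\varphi_i(p)$. The coframe $\{\mathrm{d}z,\mathrm{d}\overline z,\omega\}$ is dual to the frame $\{Z,\overline Z,\dt\}$, so for any $u$ one has $\mathrm{d}u=(Zu)\mathrm{d}z+(\overline Z u)\mathrm{d}\overline z+(\dt u)\omega$; using $\overline Z f=0$ this gives $\mathrm{d}f=(Zf)\mathrm{d}z+(\dt f)\omega$. Since $g$ is CR it preserves $\ker\omega$, whence $g^\ast\omega=\lambda\omega$, and comparing $\mathrm{d}(g^\ast\omega)=g^\ast(\mathrm{d}\omega)$ with $\mathrm{d}\omega=2i\,\mathrm{d}z\wedge\mathrm{d}\overline z$ identifies $\lambda=|Zf|^2$. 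Wedging the three pullbacks and discarding every term containing $\omega\wedge\omega$ yields
\[ g^\ast\left(\mathrm{d}z\wedge\mathrm{d}\overline z\wedge\omega\right)=|Zf|^4\,\mathrm{d}z\wedge\mathrm{d}\overline z\wedge\omega. \]
As $\mathrm{d}z\wedge\mathrm{d}\overline z\wedge\omega$ is a constant multiple of the volume form and $Zf(\varphi_i(p))\neq 0$, the Jacobian of $g$ is nonzero at $\varphi_i(p)$, so $g$ is a local CR diffeomorphism and $\varphi:=g\circ\varphi_i$ is an admissible chart near $p$. Writing the cocycle relation of Definition \ref{def104} for the transition $g_{j,i}=\varphi\circ\varphi_i^{-1}=g$ with target differential represented by the constant $q_j\equiv 1$ (i.e. $[\mathrm{d}z^2]_\omega$) gives $q_i=(q_j\circ g)(Zg^1)^2=(Zf)^2$, which holds by construction. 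Hence $q$ is represented by $1$ in the chart $\varphi$, that is $q=\varphi^\ast[\mathrm{d}z^2]_\omega$, proving existence.

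For uniqueness, let $\varphi,\psi$ be two such charts and $\tau=(\tau^1,\tau^2)=\psi\circ\varphi^{-1}$ the (CR diffeomorphism) transition map. Applying the cocycle relation with both differentials represented by $1$ gives $(Z\tau^1)^2=1$, hence $Z\tau^1=\pm 1$ on connected components. Together with $\overline Z\tau^1=0$ and the bracket identity $[Z,\overline Z]=-2i\,\dt$, which forces $\dt\tau^1=\tfrac{i}{2}[Z,\overline Z]\tau^1=0$, all three derivatives of $\tau^1$ in the frame $\{Z,\overline Z,\dt\}$ are determined; integrating gives $\tau^1=\pm z+z_0$ for a constant $z_0\in\C$. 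Substituting this into the remaining CR condition $\overline Z\left(\tau^2+i|\tau^1|^2\right)=0$ determines $\tau^2$ up to a real additive constant: indeed the difference of two real solutions $v$ satisfies $\overline Z v=0$, hence $Zv=\overline{\overline Z v}=0$ (as $v$ is real), hence $\dt v=\tfrac{i}{2}[Z,\overline Z]v=0$, so $v$ is constant. One checks that the left Heisenberg translations composed optionally with the CR automorphism $z\mapsto -z$ solve the system, so $\tau$ is exactly of the announced form, the Heisenberg analogue of a half-translation.

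I expect the main obstacle to be the verification that $g$ is a genuine local diffeomorphism rather than merely a CR map: everything downstream—that $\varphi=g\circ\varphi_i$ is an admissible chart and that the cocycle relation applies—hinges on the nonvanishing of the Jacobian, which is precisely where the hypothesis $q(p)\neq 0$ (equivalently $Zf\neq 0$) enters, through the identity $g^\ast(\mathrm{d}z\wedge\mathrm{d}\overline z\wedge\omega)=|Zf|^4\,\mathrm{d}z\wedge\mathrm{d}\overline z\wedge\omega$. The uniqueness part is comparatively mechanical once $(Z\tau^1)^2=1$ is extracted from the cocycle relation and the frame $\{Z,\overline Z,\dt\}$ is used to integrate the resulting overdetermined first-order system.
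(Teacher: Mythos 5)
The paper contains no proof of this proposition — it is quoted verbatim from \cite{Tim2} (Proposition 2.3.1 there) — and your reconstruction is correct and follows exactly the route the surrounding remarks sketch: Proposition 2.3.4 of \cite{Tim2} to get $q_i=(Zf)^2$ with $f$ CR and $Zf\neq 0$, the $B_2$ criterion to produce the real partner $h$ with $\overline Z\left(h+i|f|^2\right)=0$, a sound Jacobian identity $g^\ast\left(\mathrm{d}z\wedge\mathrm{d}\overline z\wedge\omega\right)=|Zf|^4\,\mathrm{d}z\wedge\mathrm{d}\overline z\wedge\omega$ for local invertibility, and a valid integration of the overdetermined system $(Z\tau^1)^2=1$, $\overline Z\tau^1=0$, $\overline Z\left(\tau^2+i|\tau^1|^2\right)=0$ via $[Z,\overline Z]=-2i\,\partial_t$. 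One cosmetic point: with this paper's conventions ($\overline Z=\partial_{\overline z}-iz\partial_t$ and group law carrying $2\Im(z\overline z')$) one computes $\overline Z\left(g_2+i|g_1|^2\right)=2iz_0$ for the printed map, so the CR transitions are in fact the left translations $(z,t)\mapsto\left(\pm z+z_0,\ t+t_0-2\Im(\pm z\overline z_0)\right)$ — the sign in the proposition's $t$-component is a convention mismatch inherited from \cite{Tim2}, exactly as your identification via left translations composed with $z\mapsto -z$ suggests, and not a gap in your argument.
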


\subsection{Moduli of curve families}\label{sec1.2}

First, we recall what is the modulus of a family of curves in the complex plane (see \cite{Vas} for instance). Let $U$ be an open subset of $\C$ and $\Gamma$ be a family of $C^1$ curves lying in $U$. Denote $\adm (\Gamma)$ the set of measurable Borel functions $\rho : U \rightarrow [0,\infty]$ such that
\[ \int_\gamma \rho \mathrm{d} l = \int_a ^b \rho(\gamma (s)) |\dot \gamma_1 (s)| \mathrm{d} s \ge 1\]
for every curve $\gamma : ]a,b[ \longrightarrow \Omega$ in $\Gamma$. Elements of $\adm (\Gamma)$ are called {\it densities}. Define the {\it $2$-modulus} of $\Gamma$ by
\[ M_2(\Gamma) = \underset{\rho\in \adm (\Gamma)}\inf \int_U \rho^2 \mathrm{d} L^2\]
where $\mathrm{d} L^2$ is the Lebesgue measure on $\R ^2$. A density $\rho_0$ is said {\it extremal for $\Gamma$} if
\[ M_2(\Gamma) = \int_U \rho_0 ^2 \mathrm{d} L^2.\]
\begin{remark}
\begin{itemize}
\item Sometimes, authors give another (though equivalent) definition of the modulus of a curve family. For a non negative measurable Borel function $\rho : U \rightarrow [0,+\infty]$, denote $\mathrm{L}_\rho (\Gamma) = \underset{\gamma \in \Gamma} \inf \int_\gamma \rho \mathrm{d} l$ and $\mathrm{A}_\rho (U) = \int_U \rho^2 \mathrm{d} L^2$. Then, it is clear that 
\[ M_2 (\Gamma) = \underset {\rho} \inf \frac{\mathrm{A}_\rho (U)}{(\mathrm{L}_\rho(\Gamma))^2}\]
where the infimum is taken over all non negative measurable Borel functions $\rho$ in $U$. We chose the other definition here because with it, the essential uniqueness of an extremal density is more clear (see Theorem 2.1.2 in \cite{Vas} for instance).
\item Using the same notations, the {\it extremal length} of $\Gamma$ is
\[ E(\Gamma) = \underset{\rho} \sup \frac{(\mathrm{L}_\rho (\Gamma))^2}{\mathrm{A}_\rho(U)}.\]
\end{itemize}
\end{remark}
For curves in the Heisenberg group, things are quite the same. One needs to consider legendrian curves because these are the locally rectifiable curves for the usual left invariant metric $d_\h (p,q) = \|p^{-1}q\|_\h$ with $\|(z,t)\|_\h = \left(t^2 + |z|^4\right)^{\frac{1}{4}}$. Let $\gamma = (\gamma_1 , \gamma_2) : I \longrightarrow \h$ be a $C^1$ curve. $\gamma$ is legendrian if and only if
\[ \dot \gamma_2 (s) = -2 \Im (\overline \gamma_1 (s) \dot \gamma_1 (s)) \text{ for every $s\in I$.}\]
Then, let $\Gamma$ be a family of legendrian curves in an open subset $\Omega$ of $\h$. Denote $\adm (\Gamma)$ the set of measurable Borel functions $\rho : \Omega \longrightarrow [0,\infty]$ such that
\[ \int_\gamma \rho \mathrm{d} l = \int_a ^b \rho(\gamma (s)) |\dot \gamma_1 (s)| \mathrm{d} s \ge 1\]
for every curve $\gamma : ]a,b[ \longrightarrow \Omega$ in $\Gamma$. Elements of $\adm (\Gamma)$ are called {\it densities}. Define the {\it $4$-modulus} of $\Gamma$ by
\[ M_4(\Gamma) = \underset{\rho\in \adm (\Gamma)}\inf \int_\Omega \rho^4 \mathrm{d} L^3\]
where $\mathrm{d} L^3$ is the Lebesgue measure on $\R ^3$. A density $\rho_0$ is said {\it extremal for $\Gamma$} if
\[ M_4(\Gamma) = \int_\Omega \rho_0 ^4 \mathrm{d} L^3.\]

As in the complex case, an extremal density is essentially unique (see \cite[Proposition 3.4]{BFP2} for instance). The modulus of a family of curves is a conformal invariant \cite{KR2} and a classical tool in the theory of quasiconformal maps in the Heisenberg group \cite{KR, KR2}, with various purposes such as finding extremal quasiconformal maps (see \cite{BFP, BFP2, Tim, Tan}) or for proving non-existence of quasiconformal maps between certain CR manifolds (see \cite{Kim, Min}). We will be interested in computing the modulus of a family of curves when this family is a foliation by horizontal trajectories of a quadratic differential.

\section{Modulus of a foliation by trajectories of a quadratic differential}\label{sec2}

\subsection{The result in the complex plane}\label{sec2.1}
As stated in the introduction, Theorem 1 is a local formulation of a well known (\cite[Theorem 2.21]{PT}) result dealing with the computation of the modulus (or extremal length) of a foliation in the complex plane. We give it here an elementary proof that doesn't use all the potential of conformal geometry because these are tools we don't have to our disposal in the setting of families of legendrian curves in the Heisenberg group. To be precise, we give a proof that uses only the annihilation of  the Cauchy-Riemann operator by the quadratic differential. 

\begin{theorem}\label{th1}
Let $I,J$ be intervals and $\Phi : I\times J \longrightarrow U \subset \C$ be a $C^2$-diffeomorphism. Denote
\[\Gamma = \{ \Phi(.,p) \ | \ p \in J\}.\]
Let $q$ be a holomorphic quadratic differential on $U$ such that every curve in $\Gamma$ is a horizontal trajectory of $q$. Then,
\[ M_2 (\Gamma) = \int_U \frac{|q|}{(l_\Gamma)^2} \mathrm{d}L^2\]
where, for every $z\in U$, $l_\Gamma (z) = l_q ( \Phi(.,\Phi^{-1}_2 (z)))$.
\end{theorem}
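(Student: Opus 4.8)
The plan is to establish the two inequalities $M_2(\Gamma)\le \int_U |q|/(l_\Gamma)^2\,\mathrm dL^2$ and $M_2(\Gamma)\ge \int_U |q|/(l_\Gamma)^2\,\mathrm dL^2$, the first by exhibiting an explicit admissible density and the second by a Gr\"otzsch-type length--area argument. Both rest on a single computation, the announced \emph{decomposition of the $q$-area} (Lemma \ref{lemma1}), which I carry out first. Throughout I abbreviate $\phi=\partial_s\Phi$ and $\psi=\partial_p\Phi$, I fix a local holomorphic square root $\sqrt q$ of $q$ away from the discrete (hence $L^2$-negligible) zero set of $q$, and I note that $l_\Gamma$ is constant along each leaf; I write $L(p)=l_q(\Phi(\cdot,p))=\int_I\sqrt{|q(\Phi)|}\,|\phi|\,\mathrm ds$ for its value on $\gamma_p=\Phi(\cdot,p)$.

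The crux is that the transverse component $\Im(\sqrt q\,\psi)$ is independent of $s$. Indeed, since $q$ is holomorphic the form $\sqrt q\,\mathrm dz$ is closed, so its pullback $\Phi^\ast(\sqrt q\,\mathrm dz)=\sqrt q\,\phi\,\mathrm ds+\sqrt q\,\psi\,\mathrm dp$ is closed as well (here $\Phi$ being $C^2$ lets the mixed partials commute), giving $\partial_s(\sqrt q\,\psi)=\partial_p(\sqrt q\,\phi)$. The horizontal-trajectory hypothesis says $q(\Phi)\,\phi^2>0$ for every $s$, i.e. $\sqrt q\,\phi$ is \emph{real} on all of $I\times J$; differentiating in $p$ preserves this, so $\partial_p(\sqrt q\,\phi)$ and hence $\partial_s(\sqrt q\,\psi)$ are real, whence $\partial_s\Im(\sqrt q\,\psi)=0$. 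I may therefore set $b(p):=\Im(\sqrt q\,\psi)$ (whose modulus is branch-independent). A short computation using $\sqrt q\,\phi\in\R$ gives $|q|\,\Im(\overline\phi\,\psi)=(\sqrt q\,\phi)\,b(p)$, so the area element decomposes as
\[ |q|\,\mathrm dL^2=|q|\,\bigl|\Im(\overline\phi\,\psi)\bigr|\,\mathrm ds\,\mathrm dp=\sqrt{|q|}\,|\phi|\,|b(p)|\,\mathrm ds\,\mathrm dp. \]
I expect this lemma to be the main obstacle: the rest is bookkeeping, whereas this is the one step where holomorphicity (the CR analogue being the kernel condition on the operator) is genuinely used.

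For the upper bound I take $\rho_0=\sqrt{|q|}/l_\Gamma$. Along $\gamma_p$ the function $l_\Gamma$ equals the constant $L(p)$, so $\int_{\gamma_p}\rho_0\,\mathrm dl=\frac1{L(p)}\int_I\sqrt{|q|}\,|\phi|\,\mathrm ds=1$; thus $\rho_0\in\adm(\Gamma)$ and $M_2(\Gamma)\le\int_U\rho_0^2\,\mathrm dL^2=\int_U |q|/(l_\Gamma)^2\,\mathrm dL^2$.

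For the lower bound, let $\rho\in\adm(\Gamma)$ be arbitrary. On each leaf, Cauchy--Schwarz against the measure $\mathrm d\mu_p=\sqrt{|q|}\,|\phi|\,\mathrm ds$ (of total mass $L(p)$) gives
\[ 1\le\Bigl(\int_{\gamma_p}\rho\,\mathrm dl\Bigr)^2=\Bigl(\int_I\frac{\rho}{\sqrt{|q|}}\,\mathrm d\mu_p\Bigr)^2\le L(p)\int_I\frac{\rho^2}{|q|}\,\mathrm d\mu_p=L(p)\int_I\frac{\rho^2\,|\phi|}{\sqrt{|q|}}\,\mathrm ds, \]
so that $\int_I \rho^2|\phi|/\sqrt{|q|}\,\mathrm ds\ge 1/L(p)$. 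Multiplying by $|b(p)|$, integrating over $p\in J$, and using the decomposition to recognize the left-hand side as $\int_U\rho^2\,\mathrm dL^2$, I obtain $\int_U\rho^2\,\mathrm dL^2\ge\int_J |b(p)|/L(p)\,\mathrm dp$. The same decomposition applied to $\rho_0$ yields $\int_U |q|/(l_\Gamma)^2\,\mathrm dL^2=\int_J |b(p)|/L(p)\,\mathrm dp$, which is exactly the lower bound. Combining the two inequalities gives $M_2(\Gamma)=\int_U |q|/(l_\Gamma)^2\,\mathrm dL^2$, with $\rho_0$ extremal.
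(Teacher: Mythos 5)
Your proof is correct, and its overall skeleton is the same as the paper's: the same decomposition of the $q$-area (your $b(p)$ is exactly the paper's $\lambda(p)$, since $\sqrt q\,\partial_p\Phi=\mu\,\partial_p\Phi/\partial_s\Phi$ with $\mu=\sqrt{(q\circ\Phi)(\partial_s\Phi)^2}$), the same extremal density $\rho_0=\sqrt{|q|}/l_\Gamma$, and the same Cauchy--Schwarz duality on each leaf (yours is organized against the leaf measure $\sqrt{|q|}\,|\partial_s\Phi|\,\mathrm{d}s$, the paper's against $|\mathrm{J}_\Phi|^{1/2}$; they unwind to the same inequality). Where you genuinely diverge is the proof of the key lemma. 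The paper never introduces $\sqrt q\,\mathrm{d}z$: it writes $q=\bigl(\mu(\Phi^{-1}_1)/\partial_s\Phi(\Phi^{-1})\bigr)^2$, differentiates the relation $\partial_{\overline w}q=0$ through the chain rule, and substitutes the inverse-function formulas for $\partial_{\overline w}\Phi^{-1}_1$, $\partial_{\overline w}\Phi^{-1}_2$ to conclude $\partial_s\bigl(\mu\,\partial_p\Phi/\partial_s\Phi\bigr)=\partial_p\mu\in\R$. You instead obtain the same identity in two lines from the closedness of the holomorphic $1$-form $\sqrt q\,\mathrm{d}z$ pulled back by $\Phi$, together with the fact that horizontality makes $\sqrt q\,\partial_s\Phi$ real. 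Your route is shorter and more conceptual --- it is the germ of the natural-coordinate argument, $\Im(\sqrt q\,\partial_p\Phi)$ being the transverse measure of the foliation --- but it leans on exactly the piece of conformal machinery the paper is deliberately avoiding: in the Heisenberg analogue (Lemma \ref{lemma2}) the quadratic differential is only assumed to satisfy $B_2q=0$, it need not be the square of the differential of a CR function, so there is no analogue of $\sqrt q$ as a closed form and no primitive $w=\int\sqrt q\,\mathrm{d}z$; only a bare-hands computation of the paper's type transfers to that setting, which is the stated point of giving Theorem \ref{th1} this ``elementary'' proof. Two small tidying remarks on your write-up: horizontality forces $q(\Phi)(\partial_s\Phi)^2>0$, so $q$ has no zeros on $U$ at all and your caveat about the discrete zero set is vacuous; and since $I\times J$ is simply connected, $\mu/\partial_s\Phi$ is a global branch of $\sqrt{q\circ\Phi}$, which disposes of the local-branch bookkeeping entirely.
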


The proof is a consequence of the following lemma:
\begin{lemma}\label{lemma1}
Let $q : U\rightarrow \C$ be a holomorphic quadratic differential on a domain $U$ of $\C$. Let $I,J$ be intervals and $\Phi : I \times J \rightarrow U$ be a diffeomorphism such that every curve $\Phi(.,p)$ for $p\in J$ is a horizontal trajectory for $q$. Then, there is a nowhere vanishing function $\lambda : J \rightarrow \R$ such that
\begin{eqnarray}\label{eqlem1}
(|q|\circ \Phi)\mathrm{J}_\Phi = \sqrt{(|q| \circ \Phi)}|\partial_s \Phi|\lambda
\end{eqnarray}
where $\mathrm{J}_\Phi$ is the Jacobian determinant of $\Phi$.
\end{lemma}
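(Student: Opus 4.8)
The plan is to show that the quantity
\[\mu \;:=\; \frac{\sqrt{|q|\circ\Phi}\;\mathrm{J}_\Phi}{|\partial_s\Phi|}\]
is constant along each leaf $s\mapsto\Phi(s,p)$, i.e. depends on $p$ alone. Dividing the asserted identity \eqref{eqlem1} by $\sqrt{|q|\circ\Phi}$ shows this is exactly the claim, with $\lambda(p)=\mu$. First I would observe that $q$ vanishes nowhere on $U$: every point of $U=\Phi(I\times J)$ lies on a leaf, and the horizontal trajectory condition $q(\Phi)(\partial_s\Phi)^2>0$ forces $q(\Phi)\neq0$ there, which also legitimates the division above. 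Hence near any point one may choose a holomorphic square root $h=\sqrt q$, with $|h|^2=|q|$ and $h_{\bar z}=0$; the trajectory condition becomes $\bigl(h(\Phi)\,\partial_s\Phi\bigr)^2=q(\Phi)(\partial_s\Phi)^2>0$, so $r:=h(\Phi)\,\partial_s\Phi$ is a nonzero real number.

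Writing $a=\partial_s\Phi$, $b=\partial_p\Phi$ and using $\mathrm{J}_\Phi=\Im(\bar a\,b)$, I would next rewrite $\mu$ intrinsically. Since $h(\Phi)=r/a$ with $r$ real we have $|h(\Phi)|=|r|/|a|$, whence
\[\mu \;=\; |r|\,\frac{\Im(\bar a\,b)}{|a|^2} \;=\; |r|\,\Im(b/a) \;=\; \operatorname{sgn}(r)\,\Im\!\bigl(h(\Phi)\,\partial_p\Phi\bigr).\]
On a fixed leaf $r$ is continuous and nonvanishing on the interval $I$, so $\operatorname{sgn}(r)$ is constant in $s$; note also that $\mu$ is independent of the branch of $h$ (flipping $h\mapsto-h$ flips both $\operatorname{sgn}(r)$ and the imaginary part), so $\mu$ is globally well defined even though $h$ is only local.

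The heart of the argument is then to show $\partial_s\mu=0$, and the key identity is
\[\partial_s\bigl(h(\Phi)\,\partial_p\Phi\bigr)=\partial_p\bigl(h(\Phi)\,\partial_s\Phi\bigr).\]
I would prove it by expanding both sides: the terms involving $\partial_s\partial_p\Phi=\partial_p\partial_s\Phi$ cancel by equality of mixed partials ($\Phi$ being $C^2$), while the remaining terms are $h'(\Phi)\,\partial_s\Phi\,\partial_p\Phi-h'(\Phi)\,\partial_p\Phi\,\partial_s\Phi=0$. Here it is crucial that $h$ is holomorphic: the real chain rule gives $\partial_s(h\circ\Phi)=h'(\Phi)\,\partial_s\Phi+h_{\bar z}(\Phi)\,\overline{\partial_s\Phi}$, and the annihilation of the Cauchy--Riemann operator, $h_{\bar z}=0$, is exactly what kills the antiholomorphic term and lets the cross terms cancel. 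Taking imaginary parts and using that $\operatorname{sgn}(r)$ is locally constant in $s$,
\[\partial_s\mu=\operatorname{sgn}(r)\,\partial_s\Im\bigl(h(\Phi)\,\partial_p\Phi\bigr)=\operatorname{sgn}(r)\,\partial_p\Im\bigl(h(\Phi)\,\partial_s\Phi\bigr)=\operatorname{sgn}(r)\,\partial_p\Im(r)=0,\]
since $r$ is real.

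Because $I$ is connected, $\partial_s\mu=0$ yields $\mu(s,p)=\lambda(p)$ for a function $\lambda$ of $p$ alone, which is the claimed identity. Finally $\lambda$ vanishes nowhere: $\sqrt{|q|\circ\Phi}>0$, and $|\partial_s\Phi|>0$ and $\mathrm{J}_\Phi\neq0$ because $\Phi$ is a diffeomorphism, so $\mu\neq0$ everywhere. The main obstacle is precisely the holomorphicity step: it is what forces the cross terms to cancel and, morally, what lets the would-be natural coordinate $\int h\,\mathrm{d}z$ have commuting mixed partials without ever being constructed; the only other care needed is the bookkeeping around the local, branch-dependent square root $h$ and the locally constant sign of $r$.
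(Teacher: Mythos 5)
Your proof is correct, but it reaches the paper's key identity by a genuinely different mechanism. Both arguments reduce the lemma to showing that (in your notation) $\Im\bigl(h(\Phi)\,\partial_p\Phi\bigr)$ is independent of $s$: the paper's quantity $\mu\,\partial_p\Phi/\partial_s\Phi$ coincides, up to the locally constant sign $\operatorname{sgn}(r)$, with your $h(\Phi)\,\partial_p\Phi$, and the paper's conclusion $\partial_s\bigl(\mu\,\partial_p\Phi/\partial_s\Phi\bigr)=\partial_p\mu\in\R$ is literally your identity $\partial_s\bigl(h(\Phi)\,\partial_p\Phi\bigr)=\partial_p\bigl(h(\Phi)\,\partial_s\Phi\bigr)=\partial_p r$. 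The difference is how holomorphicity enters. You pass to a local holomorphic square root $h=\sqrt q$ (legitimate, since the horizontal-trajectory condition forces $q\neq 0$ on $U=\Phi(I\times J)$) and obtain the identity in one line from the holomorphic chain rule plus equality of mixed partials; your bookkeeping (branch independence of $\mu$, constancy of $\operatorname{sgn}(r)$ in $s$, the $C^2$ regularity needed for Clairaut, consistent with Theorem \ref{th1}) is in order. The paper instead never extracts a square root of $q$: it writes $q=\left(\mu(\Phi^{-1})/\partial_s\Phi(\Phi^{-1})\right)^2$, applies $\partial_{\overline w}$, converts $\partial_{\overline w}q=0$ into equation \ref{eq1.1} via the inverse-function derivative formulas for $\partial_{\overline w}\Phi^{-1}_1$ and $\partial_{\overline w}\Phi^{-1}_2$, and then verifies the identity by direct computation. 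Your route is shorter and more transparent in the plane; what the paper's more laborious route buys is precisely what it needs afterwards: the same pull-back-of-the-PDE scheme transfers essentially word for word to Lemma \ref{lemma2} in the Heisenberg group, where $q$ is only assumed to lie in the kernel of $B_2$ and hence need not be the square of the differential of any CR function, so there is no analogue of your $h$ (nor of the natural coordinate it implicitly integrates), and the square-root shortcut is unavailable in the setting the paper is actually aiming at.
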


\begin{remark}\label{rk1}

\begin{itemize}
\item What the lemma says is that, on $I\times J$, the $q$-area decomposes as the product of the $q$-length and a measure on $J$.
\item We will prove and use the lemma with the conclusion written in another form, easier for computations. We use here the same notations and hypotheses than in the lemma. First, we write 
\[\mu = \sqrt{(|q|\circ \Phi)}|\partial_s \Phi| = \sqrt{ (q\circ \Phi)(\partial_s \Phi)^2}.\]
We then multiply both sides of equation \ref{eqlem1} by $|\partial_s \Phi|^2$  (which is nowhere zero since $\Phi$ is a diffeomorphism) to obtain
\[ (|q|\circ \Phi) |\partial_s \Phi|^2 \mathrm{J}_\Phi = \mu |\partial_s \Phi|^2 \lambda.\]
Then, notice that $(|q|\circ \Phi) |\partial_s \Phi|^2 = \mu ^2$ and divide both sides by $\mu$ (which is nowhere zero) and we finally have the form of the conclusion we will prove:
\begin{eqnarray}\label{eqlem1.2}
\sqrt{(q\circ \Phi)(\partial_s \Phi)^2} \mathrm{J}_\phi = \lambda |\partial_s \Phi|^2.
\end{eqnarray}
\end{itemize}
\end{remark}

\begin{proof}[Proof of Theorem \ref{th1}]

Assuming the lemma for now, let $\rho \in \mathrm{adm}(\Gamma)$ be an admissible density for $\Gamma$. Then,
\[ \int_U \rho^2 \mathrm{d}L^2 = \int_J\int_I \rho^2(\Phi) |\mathrm{J}_\Phi| \mathrm{d}s\mathrm{d}p.\]
However, for every $p$ in $J$, Cauchy-Schwarz inequality gives
\[\left(\int_I \left(\rho(\Phi) |\mathrm{J}_\Phi|^{\frac{1}{2}}\right)^2 \mathrm{d}s \right)\left(\int_I \left(\frac{|\partial_s \Phi|}{|\mathrm{J}_\Phi|^{\frac{1}{2}}}\right)^2 \mathrm{d}s\right) \ge \left(\int_I \rho(\Phi) |\partial_s \Phi| \mathrm{d} s \right)^2 \underset{\text{$\rho\in \mathrm{adm}(\Gamma)$}} \ge 1.\]
Using equation \ref{eqlem1.2}, we have for every $p \in J$
\[\int_I \left(\frac{|\partial_s \Phi|}{|\mathrm{J}_\Phi|^{\frac{1}{2}}}\right)^2 \mathrm{d}s = \int_I \frac{\sqrt{(q\circ\Phi)\left(\partial_s \Phi\right)^2}}{|\lambda|}\mathrm{d}s = \frac{l_q(\Phi(.,p))}{|\lambda(p)|}.\]
Thus,
\[ \int_U \rho^2 \mathrm{d}L^2 \ge \int_J \frac{|\lambda(p)|}{l_q(\Phi(.,p))} \mathrm{d}p.\]
Since it is true for every $\rho \in \mathrm{adm}(\Gamma)$,
\[M_2 (\Gamma) \ge \int_J \frac{|\lambda (p)|}{l_q(\Phi(.,p))} \mathrm{d}p.\]
Now, let $\rho_0$ be defined for every $z$ by $\rho_0 (z)= \frac{\sqrt{|q(z)|}}{l_q(\Phi(.,\Phi^{-1}_2 (z)))}$. Then, if $\Phi(.,p_0)$ is a curve in $\Gamma$, one has
\[ \int_{\Phi(.,p_0)} \rho_0 \mathrm{d}l = \frac{1}{l_q(\Phi(.,p_0))} \int_I \sqrt{|q(\Phi(s,p_0))|}|\partial_s \Phi(s,p_0)| \mathrm{d}s =1\]
and so $\rho_0$ is admissible for $\Gamma$. Moreover,
\[\int_U \rho_0 ^2 \mathrm{d}L^2 = \int_J \int_I \frac{|q(\Phi)| |\mathrm{J}_\Phi|}{(l_q (\Phi(.,p)))^2} \mathrm{d}s\mathrm{d} p.\]
Using Lemma \ref{lemma1}, we obtain
\[\int_U \rho_0 ^2 \mathrm{d}L^2 = \int_J \int_I \frac{\sqrt{|q(\Phi)|}|\partial_s \Phi||\lambda|}{(l_q(\Phi(.,p)))^2} \mathrm{d}s\mathrm{d}p = \int_J \frac{|\lambda(p)|}{l_q(\Phi(.,p))} \mathrm{d}p.\]
Thus,
\[M_2 (\Gamma) \le \int_J \frac{|\lambda(p)|}{l_q(\Phi(.,p))}\mathrm{d}p.\]
\end{proof}

\begin{proof}[Proof of Lemma \ref{lemma1}]
As in remark \ref{rk1}, we write 
\[ \mu = \sqrt{(q \circ \Phi) \left(\partial_ s \Phi\right)^2}\]
which is a positive valued function. Moreover,
\[ \mathrm{J}_\Phi = \Im \left(\overline{\partial _s \Phi} \partial_p \Phi \right).\]
Consequently, since $\Phi$ is a diffeomorphism, $\partial_s \Phi$ is nowhere zero,
\[\mu \mathrm{J}_\Phi = \Im \left(\mu \overline{\partial _s \Phi} \partial_p \Phi \right) = |\partial _s \Phi|^2 \Im \left( \frac{\mu \partial_p \Phi}{\partial_s \Phi}\right).\]
Thus, to prove the lemma (in the form given by equation \ref{eqlem1.2}), we need to show that $\partial _ s \left( \Im \left( \frac{\mu \partial_p \Phi}{\partial_s \Phi}\right)\right) = \Im \left( \partial_s \left(\frac{\mu \partial_p \Phi}{\partial_s \Phi}\right)\right)= 0$.
For that, we first write
\[q = \left(\frac{\mu(\Phi^{-1}_1)}{\partial_s \Phi (\Phi^{-1})}\right)^2.\]
From $\partial_{\overline w } q = 0$, we obtain
\[ \partial_s \Phi \left( \partial_s \mu \partial_{\overline w} \Phi^{-1}_1 (\Phi) + \partial_p \mu \partial_{\overline w} \Phi^{-1}_2 (\Phi)\right) = \mu \left( \partial_{s,s} \Phi \partial_{\overline w} \Phi^{-1}_1(\Phi) + \partial_{s,p} \Phi \partial_{\overline w} \Phi^{-1}_2 (\Phi) \right).\]
Usual formulas for inverse functions give
\[ \partial_{\overline w} \Phi^{-1}_1 = -i \frac{\partial_p \Phi}{2 \mathrm{J}_\Phi} \text{ and }  \partial_{\overline w} \Phi^{-1}_2 = i \frac{\partial_s \Phi}{2 \mathrm{J}_\Phi}.\]
Leading to
\[ \partial_s \Phi \left(\partial_p \mu \partial_s \Phi - \partial_s \mu \partial_p \Phi\right) = \mu \left(\partial_{s,p} \Phi \partial_s \Phi - \partial_{s,s} \Phi \partial_p \Phi\right)\]
and so
\begin{eqnarray}\label{eq1.1}
\partial_s \Phi \left(\partial_s \mu \partial_p \Phi + \mu \partial_{s,p} \Phi\right) - \mu \partial_{s,s} \Phi \partial_p \Phi & = & \partial_p \mu \left(\partial_s \Phi\right)^2.
\end{eqnarray}

Now, we finally compute:
\begin{eqnarray*}
\partial_s \left(\frac{\mu \partial_p \Phi}{\partial_s \Phi}\right) & = & \frac{\partial_s \Phi \left( \partial_s \mu \partial_p \Phi + \mu \partial_{s,p} \Phi\right) - \mu \partial_p \Phi \partial_{s,s} \Phi}{\left(\partial_s \Phi\right)^2}\\
& \underset{\text{using equation \ref{eq1.1}}} = & \frac{\partial_p \mu \left(\partial_s \Phi\right)^2}{\left(\partial_s \Phi\right)^2}\\
& = & \partial_p \mu
\end{eqnarray*}
and $\partial_p \mu$ is a real valued function.
\end{proof}

If in addition to the hypotheses of the theorem we ask the curves $\Phi(.,p)$ to all have the same $q$-length $C>0$, then $l_\Gamma$ becomes constant equal to $C$ and we have the following corollary.

\begin{corollary}\label{coro1}
Let $D$ be a domain in $\C$ foliated by a family $\Gamma$ of horizontal trajectories of a holomorphic quadratic differential $q$ on $D$. Assume that each curve in $\Gamma$ has $q$-length $C>0$. Then the modulus of $\Gamma$ is
\[M_2 (\Gamma) = \frac{1}{C^2} \mathrm{Area}_q (D).\]
\end{corollary}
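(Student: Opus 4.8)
The plan is to obtain Corollary~\ref{coro1} as an immediate specialization of Theorem~\ref{th1}. The only new hypothesis is that every leaf of $\Gamma$ has the same $q$-length $C>0$, so the strategy is simply to feed this extra information into the formula already established in Theorem~\ref{th1} and simplify.

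First I would check that Theorem~\ref{th1} applies in the present setting. The corollary is phrased in terms of a domain $D$ foliated by $\Gamma$, whereas Theorem~\ref{th1} is stated for a global $C^2$-diffeomorphism $\Phi : I\times J \to D$. A sufficiently regular foliation of $D$ by the trajectories of $\Gamma$ furnishes exactly such a parametrization: one takes $s$ to run along each leaf and $p\in J$ to label the leaves, so that $\Gamma = \{\Phi(\cdot,p)\ |\ p\in J\}$ and each $\Phi(\cdot,p)$ is a horizontal trajectory of $q$. With this $\Phi$, Theorem~\ref{th1} yields
\[ M_2(\Gamma) = \int_D \frac{|q|}{(l_\Gamma)^2}\,\mathrm{d}L^2, \]
where $l_\Gamma(z)$ denotes the $q$-length of the leaf through $z$.

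Next I would invoke the extra hypothesis. By definition $l_\Gamma(z) = l_q(\Phi(\cdot,\Phi_2^{-1}(z)))$ is the $q$-length of the leaf containing $z$, and the assumption that every curve in $\Gamma$ has $q$-length $C$ says precisely that this value is independent of $z$; that is, $l_\Gamma$ is identically equal to the constant $C$. Substituting $l_\Gamma \equiv C$ into the formula above lets me pull the constant through the integral:
\[ M_2(\Gamma) = \int_D \frac{|q|}{C^2}\,\mathrm{d}L^2 = \frac{1}{C^2}\int_D |q|\,\mathrm{d}L^2. \]
Recognizing $\int_D |q|\,\mathrm{d}L^2$ as $\mathrm{Area}_q(D)$, by the definition of the $q$-area recalled in the introduction, then completes the proof.

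There is no genuine obstacle here: the corollary is a direct reading of Theorem~\ref{th1}. The single point requiring a word of justification is that the constant-length assumption forces the leaf-length function $l_\Gamma$ to be the constant $C$; everything else amounts to the definition of $\mathrm{Area}_q$ and factoring a constant out of an integral.
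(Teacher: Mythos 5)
Your proposal is correct and matches the paper's own treatment: the paper derives Corollary~\ref{coro1} exactly as you do, by noting that the constant-length hypothesis makes $l_\Gamma \equiv C$ in the formula of Theorem~\ref{th1}, so the constant factors out and the remaining integral is $\mathrm{Area}_q(D)$ by definition.
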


\subsection{The result in the Heisenberg group}
We now show Theorem 2. It's proof follows the same steps as the one of Theorem \ref{th1} with a lemma similar to Lemma \ref{lemma1}.

\begin{theorem}\label{th2}
Let $I$ be an interval, $\Lambda$ an open subset of $\R^2$ and $\Phi : I \times \Lambda \longrightarrow \Omega \subset \h$ be a $C^2$-diffeomorphism such that for every $p \in \Lambda$, $\Phi(.,p)$ is a legendrian curve. Denote
\[\Gamma = \{ \Phi(.,p) \ | \ p \in \Lambda\}.\]
Let $q$ be quadratic differential on $\Omega$ satisfying $B_2 q =0$ and such that every curve in $\Gamma$ is a horizontal trajectory of $q$. Then,
\[M_4 (\Gamma) = \int_\Omega \frac{|q|^2}{(l_\Gamma)^4}\mathrm{d} L^3\]
where, for every $(z,t)\in \Omega$, $l_\Gamma (z,t) = l_q(\Phi(.,\Phi^{-1}_2 (z,t)))$.
\end{theorem}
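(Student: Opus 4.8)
The plan is to mirror the proof of Theorem \ref{th1}: isolate a decomposition of the $q$-volume into a $q$-length factor along the leaves and a transverse measure, then run the same inequality argument adapted to the $4$-modulus. Throughout write $\Phi = (\Phi_1,\Phi_2)$ with $\Phi_1 \in \C$ and $\Phi_2\in\R$, and set
\[\mu = \sqrt{(q\circ\Phi)(\partial_s\Phi_1)^2} = \sqrt{|q|\circ\Phi}\,|\partial_s\Phi_1|,\]
which is positive because each $\Phi(\cdot,p)$ is a horizontal trajectory; note that $l_q(\Phi(\cdot,p)) = \int_I \mu\, \mathrm{d}s$. The analogue of Lemma \ref{lemma1} I would prove is the existence of a nowhere vanishing $\lambda : \Lambda \to \R$ with
\[|q|^2\,\mathrm{J}_\Phi = \mu\,\lambda, \qquad\text{equivalently}\qquad |\mathrm{J}_\Phi| = \frac{|\lambda|\,|\partial_s\Phi_1|}{|q|^{3/2}}.\]
This says that on $I\times\Lambda$ the $q$-volume factors as the product of the $q$-length element $\mu\,\mathrm{d}s$ along the leaves and the measure $|\lambda|\,\mathrm{d}p$ transverse to them.

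The heart of the matter is this decomposition, and it is where $B_2 q = 0$ enters. The key algebraic observation I would use is that, away from zeros of $q$,
\[B_2 q = 0 \iff \overline Z\!\left(q^2\overline q\right) = 0,\]
since $\overline Z(q^2\overline q) = q\bigl(2\overline q\,\overline Z q + q\,\overline{Z q}\bigr)$ and the bracket is exactly the coefficient of $B_{2,\h}q$. Thus $q^2\overline q = q|q|^2$ is a CR function, and this plays the role that holomorphicity $\partial_{\overline w}q = 0$ played in Lemma \ref{lemma1}. Writing $(q\circ\Phi) = \mu^2/(\partial_s\Phi_1)^2$ and $(\overline q\circ\Phi) = \mu^2/(\overline{\partial_s\Phi_1})^2$, I would express $q^2\overline q$ through $\mu$ and $\Phi$, impose $\overline Z(q^2\overline q) = 0$ after pulling $\overline Z = \partial_{\overline z} - iz\,\partial_t$ back through $\Phi^{-1}$, and use the inverse-function formulas together with the legendrian relation $\partial_s\Phi_2 = -2\Im(\overline{\Phi_1}\,\partial_s\Phi_1)$ to show that the combination $|q|^2\mathrm{J}_\Phi/\mu$ has vanishing $\partial_s$-derivative, hence depends on $p$ only; that function is $\pm\lambda$. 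I expect this to be the main obstacle: unlike the two-dimensional case, $\mathrm{J}_\Phi$ is now a $3\times 3$ determinant, $\overline Z$ carries the $t$-derivative, and only the $s$-direction is constrained to be legendrian, so the computation is substantially more delicate than the one in Lemma \ref{lemma1}.

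Granting the decomposition, the modulus computation is routine. For admissible $\rho$, writing $\int_\Omega\rho^4\,\mathrm{d}L^3 = \int_\Lambda\int_I \rho^4(\Phi)|\mathrm{J}_\Phi|\,\mathrm{d}s\,\mathrm{d}p$ and applying H\"older with exponents $4$ and $4/3$ to the splitting $\rho(\Phi)|\partial_s\Phi_1| = \bigl(\rho(\Phi)|\mathrm{J}_\Phi|^{1/4}\bigr)\bigl(|\partial_s\Phi_1|\,|\mathrm{J}_\Phi|^{-1/4}\bigr)$ gives, for each $p$,
\[\int_I \rho^4(\Phi)|\mathrm{J}_\Phi|\,\mathrm{d}s \ \ge\ \left(\int_I \frac{|\partial_s\Phi_1|^{4/3}}{|\mathrm{J}_\Phi|^{1/3}}\,\mathrm{d}s\right)^{-3} = \frac{|\lambda(p)|}{\bigl(l_q(\Phi(\cdot,p))\bigr)^3},\]
where the last equality substitutes the decomposition (the $p$-dependent factor $|\lambda|$ pulls out of the $s$-integral, and $\int_I|q|^{1/2}|\partial_s\Phi_1|\,\mathrm{d}s = l_q$). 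Integrating over $\Lambda$ bounds $M_4(\Gamma)$ from below by $\int_\Lambda |\lambda|/(l_q)^3\,\mathrm{d}p$. For the reverse inequality I would take $\rho_0(z,t) = \sqrt{|q(z,t)|}/l_q(\Phi(\cdot,\Phi^{-1}_2(z,t)))$, check $\int_{\Phi(\cdot,p)}\rho_0\,\mathrm{d}l = 1$ so that $\rho_0$ is admissible, and compute $\int_\Omega\rho_0^4\,\mathrm{d}L^3 = \int_\Lambda |\lambda|/(l_q)^3\,\mathrm{d}p$ using the decomposition again. This shows $\rho_0$ is extremal and yields $M_4(\Gamma) = \int_\Lambda |\lambda|/(l_q)^3\,\mathrm{d}p$; a final change of variables back to $\Omega$, once more via $|q|^2\mathrm{J}_\Phi = \mu\lambda$, rewrites this as $\int_\Omega |q|^2/(l_\Gamma)^4\,\mathrm{d}L^3$, as claimed.
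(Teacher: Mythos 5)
Your overall strategy coincides with the paper's own proof: the decomposition you posit is exactly Lemma \ref{lemma2} (your $|q|^2\,\mathrm{J}_\Phi=\mu\lambda$ is the paper's equation \ref{eqlem2}, equivalently the form $\mu^3\mathrm{J}_\Phi=\lambda|\partial_s\Phi_1|^4$ of Remark \ref{rk2}, since $\mu^3/|\partial_s\Phi_1|^4=(|q|^2\circ\Phi)/\mu$), and your H\"older argument with exponents $4$ and $4/3$ together with the candidate extremal density $\rho_0=\sqrt{|q|}/l_\Gamma$ reproduces the paper's proof of Theorem \ref{th2} essentially line by line; those parts are correct. Your one genuine addition is the reformulation of the hypothesis: since $q$ vanishes nowhere on $\Omega$ (every point lies on a horizontal trajectory), $B_2q=0$ is indeed equivalent to $\overline Z\left(q|q|^2\right)=0$, because $\overline Z\left(q^2\overline q\right)=q\left(2\overline q\,\overline Zq+q\,\overline{Zq}\right)$ and the bracket is the coefficient of $B_{2,\h}q$. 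This is a clean repackaging of what the paper actually does: its equation \ref{eq1} is precisely the pullback of this CR condition through $\Phi$ via the chain rule.

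The gap is that you stop exactly where the real work begins. The sentence promising to ``show that the combination $|q|^2\mathrm{J}_\Phi/\mu$ has vanishing $\partial_s$-derivative'' \emph{is} Lemma \ref{lemma2}, and nothing in your text establishes it. Concretely, one needs: (i) the identity $\mathrm{J}_\Phi=-\Im\left(\overline{\partial_s\Phi_1}\,A\right)$ with $A=\partial_{p_1}\Phi_2\,\partial_{p_2}\Phi_1-\partial_{p_2}\Phi_2\,\partial_{p_1}\Phi_1+2\Phi_1\Im\left(\partial_{p_1}\Phi_1\overline{\partial_{p_2}\Phi_1}\right)$, which holds only because the leaves are legendrian; (ii) the inverse-function formulas \ref{zbar1}--\ref{zbar3} for $\overline Z\Phi^{-1}_j(\Phi)$, whose specific structure (the factor $A$ in $\overline Z\Phi^{-1}_1$, the contact-corrected transverse terms in the other two) is what lets the CR condition be compared with $\partial_s B$ where $B=\mu^3A/\left(\partial_s\Phi_1|\partial_s\Phi_1|^2\right)$; (iii) the computation of $\partial_s A$, using the legendrian relation $\partial_s\Phi_2=-2\Im\left(\overline{\Phi_1}\partial_s\Phi_1\right)$ a second time; and (iv) the final cancellation in $\Im(D)$, where the terms $\left(\partial_{p_2}\Phi_2+2\Im\left(\overline{\Phi_1}\partial_{p_2}\Phi_1\right)\right)\Im\left(\overline{\partial_s\Phi_1}\partial_{s,p_1}\Phi_1\right)$ and their $p_1\leftrightarrow p_2$ counterparts produced by the CR condition must exactly offset those coming from $\overline{\partial_s\Phi_1}\,\partial_sA$. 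This verification occupies essentially the entire proof of Lemma \ref{lemma2} in the paper and is where both hypotheses (the kernel condition and the legendrian foliation) are actually consumed; flagging it as ``the main obstacle'' and asserting it will work is not the same as doing it. Since the paper's computation does close, your plan is viable; but as written it is an outline of the paper's proof with its central verification missing, not a proof.
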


The main interest of that result is that the quadratic differential only needs to be in the kernel of $B_2$ and so doesn't have to be the square of the differential of (the complex part of) a CR map. The proof is a consequence of the following lemma:

\begin{lemma}\label{lemma2}
Let $q : \Omega \rightarrow \C$ be a quadratic differential on an open subset $\Omega$ of $\h$ such that $B_2 q = 0$. Let $I$ be an interval, $\Lambda$ be an open subset of $\R^2$ and $\Phi : I \times \Lambda \longrightarrow \Omega \subset \h$ be a diffeomorphism such that for every $p \in \Lambda$, $\Phi(.,p)$ is a horizontal trajectory for $q$. Then, there is a nowhere vanishing real valued function $\lambda$ on $\Lambda$ such that
\begin{eqnarray}\label{eqlem2}
(|q|^2\circ \Phi)\mathrm{J}_\Phi & = & \sqrt{(|q|\circ \Phi)}|\partial_s \Phi_1| \lambda
 \end{eqnarray}
where $\mathrm{J}_\Phi$ is the Jacobian determinant of $\Phi$ and $\Phi = (\Phi_1, \Phi_2)$.
\end{lemma}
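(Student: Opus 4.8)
My plan is to follow the proof of Lemma~\ref{lemma1} step by step, the one genuinely new ingredient being a workable reformulation of the hypothesis $B_2q=0$. Because each leaf is a horizontal trajectory, $q(\partial_s\Phi)=(q\circ\Phi)(\partial_s\Phi_1)^2>0$, so $q$ vanishes nowhere on $\Omega$ and one may divide by it. Using $|q|^2=q\overline q$ one checks the identity
\[\overline Z\!\left(q^2\overline q\right)=q\left(\overline Z(|q|^2)+\overline q\,\overline Z q\right),\]
so that, from the defining expression of $B_{2,\h}$, the condition $B_2q=0$ is equivalent to $\overline Z(q^2\overline q)=0$, i.e.\ to $q^2\overline q$ being a CR function. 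This is the precise analogue of the holomorphicity $\partial_{\overline w}q=0$ exploited in Lemma~\ref{lemma1}. As there I set $\mu=\sqrt{(q\circ\Phi)(\partial_s\Phi_1)^2}$, which is real and positive and satisfies $\mu^2=(|q|\circ\Phi)|\partial_s\Phi_1|^2$, whence $(|q|^2\circ\Phi)=\mu^4/|\partial_s\Phi_1|^4$. Writing the target \eqref{eqlem2} as $(|q|^2\circ\Phi)\mathrm{J}_\Phi=\mu\lambda$, it suffices to prove that
\[\lambda:=\frac{(|q|^2\circ\Phi)\,\mathrm{J}_\Phi}{\mu}=\frac{\mu^3\,\mathrm{J}_\Phi}{|\partial_s\Phi_1|^4}\]
does not depend on the leaf parameter $s$; it is then a function of $p$ alone, and it is automatically real valued and nowhere vanishing since $\mathrm{J}_\Phi\neq0$ and $\mu>0$.

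Next I would express $\mathrm{J}_\Phi$ in the contact coframe. With $T=\partial_t$, the frame $(Z,\overline Z,T)$ has dual coframe $(\mathrm{d}z,\mathrm{d}\overline z,\omega)$. The legendrian condition on the leaves is exactly that $\Phi^\ast\omega$ has no $\mathrm{d}s$ component, equivalently that $\partial_s\Phi=(\partial_s\Phi_1)Z+\overline{(\partial_s\Phi_1)}\,\overline Z$ lies in the contact plane. Put $a=\partial_s\Phi_1$, $b_j=\partial_{p_j}\Phi_1$ and $\eta_j=(\Phi^\ast\omega)(\partial_{p_j})$ for $j=1,2$. Pulling back the volume form $\mathrm{d}z\wedge\mathrm{d}\overline z\wedge\omega$, which is a fixed nonzero constant multiple of $\mathrm{d}L^3$, and using that $\Phi^\ast\omega$ has no $\mathrm{d}s$ term, yields
\[\mathrm{J}_\Phi=\Im(a\overline{b_2})\,\eta_1-\Im(a\overline{b_1})\,\eta_2\]
(up to the orientation constant). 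Comparing $\mathrm{d}(\Phi^\ast\omega)$ with $\Phi^\ast\mathrm{d}\omega=2i\,\mathrm{d}\Phi_1\wedge\mathrm{d}\overline{\Phi_1}$ also gives the structure equations $\partial_s\eta_j=-4\,\Im(a\overline{b_j})$, which I will need in order to differentiate $\mathrm{J}_\Phi$ along the leaves.

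I would then derive the inverse--function formulas for $\overline Z$, which play the role of the formulas for $\partial_{\overline w}\Phi^{-1}_1,\partial_{\overline w}\Phi^{-1}_2$ in Lemma~\ref{lemma1}. For $F=g\circ\Phi^{-1}$ the chain rule gives $\overline Z F=\sum_{k}(\partial_k g)\,\overline Z(\Phi^{-1}_k)$ over the three coordinates $k\in\{s,p_1,p_2\}$, and $\overline Z(\Phi^{-1}_k)=\mathrm{d}\Phi^{-1}_k(\overline Z)$ is the $k$-th entry of the second column of the inverse of the matrix expressing $\partial_s\Phi,\partial_{p_1}\Phi,\partial_{p_2}\Phi$ in the frame $(Z,\overline Z,T)$, namely
\[\begin{pmatrix} a & b_1 & b_2\\ \overline a & \overline{b_1} & \overline{b_2}\\ 0 & \eta_1 & \eta_2\end{pmatrix}.\]
A single round of $2\times2$ cofactors then gives $\overline Z(\Phi^{-1}_s),\overline Z(\Phi^{-1}_{p_1}),\overline Z(\Phi^{-1}_{p_2})$ as explicit expressions over the determinant of this matrix, which is purely imaginary and a constant multiple of $\mathrm{J}_\Phi$.

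Finally, since $q\circ\Phi=\mu^2/a^2$ one has $q^2\overline q=\mu^6\,a^{-4}\overline a^{-2}$ as a function on $\Omega$ (composed with $\Phi^{-1}$). Substituting this into $\overline Z(q^2\overline q)=0$, applying the chain rule with the inverse--function formulas and clearing the determinant produces a single identity tying $\partial_s\mu,\partial_{p_1}\mu,\partial_{p_2}\mu$ to the first and second order derivatives of $\Phi_1$ --- the exact counterpart of \eqref{eq1.1}. It then remains to differentiate $\lambda=\mu^3\mathrm{J}_\Phi/|a|^4$ in $s$ and to verify, using this identity together with the structure equations $\partial_s\eta_j=-4\Im(a\overline{b_j})$, that $\partial_s\lambda=0$. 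I expect this last verification to be the main obstacle: it is the analogue of the one--line computation $\partial_s(\mu\,\partial_p\Phi/\partial_s\Phi)=\partial_p\mu$ in Lemma~\ref{lemma1}, but it is substantially heavier here, both because of the extra transverse variable and because the CR condition constrains the cubic combination $q^2\overline q$ rather than $q$ itself; the real and imaginary parts and the functions $\eta_j$ must be organized carefully for the required cancellation to surface.
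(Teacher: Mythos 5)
Your preparatory work is all correct, and it is in substance the same road the paper takes. The reformulation of $B_2 q = 0$ as $\overline Z(q^2\overline q)=0$ is valid: indeed $\overline Z(q^2\overline q)=q\left(\overline Z(|q|^2)+\overline q\,\overline Z q\right)$, and $q$ vanishes nowhere since every point of $\Omega$ lies on a horizontal trajectory. Pulled back through $\Phi$ with $q\circ\Phi=\mu^2/(\partial_s\Phi_1)^2$, this logarithmic-derivative identity is exactly what the paper extracts as equation \ref{eq1}. Your frame matrix with rows $(a,b_1,b_2)$, $(\overline a,\overline{b_1},\overline{b_2})$, $(0,\eta_1,\eta_2)$ is the correct expression of $\mathrm{d}\Phi$ in the frame $(Z,\overline Z,T)$ (its determinant is $-2i\mathrm{J}_\Phi$), and its cofactors reproduce the paper's formulas \ref{zbar1}, \ref{zbar2}, \ref{zbar3}; your formula for $\mathrm{J}_\Phi$ agrees with the paper's $\mathrm{J}_\Phi=-\Im\left(\overline{\partial_s\Phi_1}A\right)$ (note $A=b_2\eta_1-b_1\eta_2$), and the structure equations $\partial_s\eta_j=-4\Im(a\overline{b_j})$ are a clean repackaging of the paper's computation of $\partial_s A$ via the legendrian relation $\partial_s\Phi_2=-2\Im(\overline{\Phi_1}\partial_s\Phi_1)$. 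The target $\partial_s\lambda=0$ for $\lambda=\mu^3\mathrm{J}_\Phi/|\partial_s\Phi_1|^4$ is likewise the paper's reduction (its quantity $-\Im(B)$ is your $\lambda$).

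However, the proposal stops exactly where the lemma begins. Everything you have established is setup; the content of the lemma is the cancellation $\partial_s\lambda=0$, and you do not carry it out, explicitly deferring it as ``the main obstacle.'' This is not a routine verification that can be waved at: in the paper it occupies the larger half of the proof --- converting the CR condition into equation \ref{eq2} via the inverse-function formulas, substituting into the expansion \ref{dsB} of $\partial_s B$, exploiting that the $\overline Z\Phi^{-1}_2$ and $\overline Z\Phi^{-1}_3$ contributions multiplying $\partial_{p_j}\mu$ are real, computing $\partial_s A$, and checking that the remaining imaginary parts cancel pairwise. Nothing in your outline certifies in advance that the second-order terms $\partial_{s,p_j}\Phi_1$ produced by differentiating $\mathrm{J}_\Phi$ are exactly absorbed by the terms coming from $B_2q=0$; that is precisely the point where the hypothesis does its work, and precisely what is missing. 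As it stands you have a correct plan with correct ingredients, matching the paper's strategy step for step, but not yet a proof.
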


\begin{remark}\label{rk2}
\begin{itemize}
\item As for the complex case, we can read the lemma as: on $I\times \Lambda$, the $q$-volume decomposes as the product of the $q$-length and a measure on $\Lambda$.
\item We will prove and use the lemma with the conclusion written in another form, easier for computations. We use here the same notations and hypotheses than in the lemma. First, we write 
\[\mu = \sqrt{(|q|\circ \Phi)}|\partial_s \Phi_1| = \sqrt{ (q\circ \Phi)(\partial_s \Phi_1)^2}.\]
We then multiply both sides of equation \ref{eqlem2} by $|\partial_s \Phi_1|^4$  (which is nowhere zero since $s\mapsto \Phi(s,p)$ is legendrian for every $p$ and $\Phi$ is a diffeomorphism; see the beginning of the proof of Lemma \ref{lemma2}) to obtain
\[ (|q|^2\circ \Phi) |\partial_s \Phi_1|^4 \mathrm{J}_\Phi = \mu |\partial_s \Phi_1|^4 \lambda.\]
Then, notice that $(|q|^2\circ \Phi) |\partial_s \Phi_1|^4 = \mu ^4$ and divide both sides by $\mu$ (which is nowhere zero) and we finally have the form of the conclusion we will mostly use:
\begin{eqnarray}\label{eqlem2.2}
\left(\sqrt{(q\circ \Phi)(\partial_s \Phi_1)}\right)^3 \mathrm{J}_\phi = \lambda |\partial_s \Phi_1|^4.
\end{eqnarray}
\end{itemize}
\end{remark}

\begin{proof}[Proof of Theorem \ref{th2}]
Assuming the lemma for now, let $\rho \in \mathrm{adm}(\Gamma)$ be an admissible density for $\Gamma$. Then,
\[ \int_\Omega \rho^4 \mathrm{d}L^3 = \int_\Lambda \int_I \rho^4(\Phi) |\mathrm{J}_\Phi| \mathrm{d}s\mathrm{d}L^2(p).\]
However, for every $p$ in $\Lambda$, H\"older inequality gives
\begin{eqnarray*}
\left(\int_I \left(\rho(\Phi) |\mathrm{J}_\Phi|^{\frac{1}{4}}\right)^4 \mathrm{d}s \right)\left(\int_I \left(\frac{|\partial_s \Phi_1|}{|\mathrm{J}_\Phi|^{\frac{1}{4}}}\right)^{\frac{4}{3}} \mathrm{d}s\right)^3 & \ge & \left(\int_I \rho(\Phi) |\partial_s \Phi_1| \mathrm{d} s \right)^4\\
& \ge & 1 \text{ (since $\rho\in \mathrm{adm}(\Gamma)$).}
\end{eqnarray*}
Using equation \ref{eqlem2.2}, we have for every $p \in \Lambda$
\[\left(\int_I \left(\frac{|\partial_s \Phi_1|}{|\mathrm{J}_\Phi|^{\frac{1}{4}}}\right)^{\frac{4}{3}} \mathrm{d}s\right)^3 = \left(\int_I \frac{\sqrt{(q\circ\Phi)\left(\partial_s \Phi_1\right)^2}}{|\lambda|^{\frac{1}{3}}} \mathrm{d}s\right)^3 = \frac{(l_q(\Phi(.,p)))^3}{|\lambda|}\]
for a nowhere vanishing function $\lambda : \Lambda \rightarrow \R$. Thus,
\[ \int_\Omega \rho^4 \mathrm{d}L^3 \ge \int_\Lambda \frac{|\lambda (p)|}{(l_q(\Phi(.,p)))^3} \mathrm{d}L^2(p).\]
Since it is true for every $\rho \in \mathrm{adm}(\Gamma)$,
\[M_4 (\Gamma) \ge \int_\Lambda \frac{|\lambda (p)|}{(l_q(\Phi(.,p)))^3} \mathrm{d}L^2(p).\]
Now, let $\rho_0$ being defined for every $(z,t)$ by $\rho_0 (z,t) = \frac{\sqrt{|q(z,t)|}}{l_q(\Phi(.,\Phi^{-1}_2 (z,t)))}$. Then, if $\Phi(.,p_0)$ is a curve in $\Gamma$, one has
\[ \int_{\Phi(.,p_0)} \rho_0 \mathrm{d}l= \frac{1}{l_q(\Phi(.,p_0))} \int_I \sqrt{|q(\Phi(s,p_0))|}|\partial_s \Phi_1(s,p_0)| \mathrm{d}s =1\]
and so $\rho_0$ is admissible for $\Gamma$. Moreover,
\[\int_\Omega \rho_0 ^4 \mathrm{d}L^3 = \int_\Lambda\int_I \frac{|q(\Phi)|^2|\mathrm{J}_\Phi|}{(l_q(\Phi(.,p)))^4} \mathrm{d}s \mathrm{d} L^2(p).\]
Using Lemma \ref{lemma2}, we obtain
\begin{eqnarray*}
\int_\Omega \rho_0 ^4 \mathrm{d}L^3 & = & \int_\Lambda \frac{|\lambda(p)|}{(l_q(\Phi(.,p)))^4}\left(\int_I \sqrt{|q(\Phi)|}|\partial_s \Phi_1| \mathrm{d}s\right)\mathrm{d}L^2(p)\\
& = & \int_\Lambda \frac{|\lambda(p)|}{(l_q(\Phi(.,p)))^3}\mathrm{d}L^2(p).
\end{eqnarray*}
Thus,
\[M_4 (\Gamma) \le \int_\Lambda \frac{|\lambda(p)|}{(l_q(\Phi(.,p)))^3}\mathrm{d}L^2(p)\]
which ends the proof.
\end{proof}

\begin{proof}[Proof of Lemma \ref{lemma2}]
As in remark \ref{rk2}, we write
\[ \mu = \sqrt{(q\circ \Phi)\left(\partial_s \Phi_1\right)^2}\]
which is a positive valued function. Moreover, using the fact that the curves $\Phi(.,p)$ are legendrian, we easily compute
\[\mathrm{J}_\Phi = -\Im \left(\overline{\partial_s \Phi_1} A\right)\]
with
\[A = \partial_{p_1} \Phi_2 \partial_{p_2} \Phi_1 - \partial_{p_2} \Phi_2 \partial_{p_1} \Phi_1 + 2 \Phi_1 \Im \left(\partial_{p_1} \Phi_1 \overline {\partial_{p_2} \Phi_1}\right).\]
Since $\Phi$ is a diffeomorphism, $\partial_s \Phi_1$ is nowhere zero. Thus,
\[\mu^3\frac{\mathrm{J}_\Phi}{| \partial_s \Phi_1|^4} = -\Im (B)\]
with
\[ B = \frac{\mu^3}{\partial_s \Phi_1 |\partial_s \Phi_1|^2} A.\]
Consequently, we only have to prove that $\Im \left(\partial_s B\right) = 0$. 

For that, we compute
\begin{multline*}
\partial_s B = \frac{3  \mu ^2\partial_s \mu |\partial_s \Phi_1|^2 - \mu ^3 \left( 2\overline{\partial_s \Phi_1} \partial_{s,s} \Phi_1 + \partial_s \Phi_1 \overline{\partial_{s,s} \Phi_1}\right)}{\partial_s \Phi_1 |\partial_s \Phi_1|^4} A + \frac{\mu^3 |\partial_s \Phi_1|^2 \partial_s A}{\partial_s \Phi_1 |\partial_s \Phi_1|^4} 
\end{multline*}
which leads to
\begin{multline*}
\partial_s B = \frac{\mu^2}{|\partial_s \Phi_1|^4} \left(\left(3\overline{\partial_s \Phi_1}\partial_s \mu - 2\mu \frac{\overline{\partial_s \Phi_1}}{\partial_s \Phi_1} \partial_{s,s} \Phi_1 - \mu \overline{\partial_{s,s}\Phi_1}\right)A+ \mu \overline{\partial_s \Phi_1}\partial_s A\right).
\end{multline*}
Denoting 
\begin{eqnarray}\label{dsB}
D & = &\left(3\overline{\partial_s \Phi_1}\partial_s \mu - 2\mu \frac{\overline{\partial_s \Phi_1}}{\partial_s \Phi_1} \partial_{s,s} \Phi_1 - \mu \overline{\partial_{s,s}\Phi_1}\right)A+ \mu \overline{\partial_s \Phi_1}\partial_s A,
\end{eqnarray}
we have to prove that $\Im (D) = 0$.

For that, we write
\[q = \left(\frac{\mu(\Phi^{-1}_1)}{\partial_s \Phi_1 (\Phi^{-1})}\right)^2.\]
From $B_2 q = 0$, we obtain
\begin{multline}\label{eq1}
3 |\partial_s \Phi_1|^2 \left(\partial_s \mu \overline Z \Phi^{-1}_1 (\Phi) + \partial_{p_1} \mu \overline Z \Phi^{-1}_2 (\Phi) +\partial_{p_2} \mu \overline Z \Phi^{-1}_3 (\Phi) \right)\\
= \overline Z \Phi^{-1}_1 (\Phi) \left(2 \overline{\partial_s \Phi_1} \partial_{s,s} \Phi_1 +  \partial_s \Phi_1 \overline{\partial_{s,s} \Phi_1}\right)\\
 + \overline Z \Phi^{-1}_2 (\Phi) \left(2 \overline{\partial_s \Phi_1} \partial_{s,p_1} \Phi_1 +  \partial_s \Phi_1 \overline{\partial_{s,p_1} \Phi_1}\right)\\
 + \overline Z \Phi^{-1}_3 (\Phi) \left(2 \overline{\partial_s \Phi_1} \partial_{s,p_2} \Phi_1 +  \partial_s \Phi_1 \overline{\partial_{s,p_2} \Phi_1}\right).
\end{multline}
Usual formulas for inverse functions give
\begin{eqnarray}\label{zbar1}
\overline Z \Phi^{-1}_1 (\Phi) & = & \frac{i}{2\mathrm{J}_\Phi} A \ \ (\neq 0);
\end{eqnarray}
\begin{eqnarray}\label{zbar2}
\overline Z \Phi^{-1}_2 (\Phi) & = & \frac{i\partial_s \Phi_1}{2 \mathrm{J}_\Phi} \left(\partial_{p_2} \Phi_2 + 2 \Im \left(\overline{\Phi_1} \partial_{p_2} \Phi_1\right)\right)
\end{eqnarray}
and 
\begin{eqnarray}\label{zbar3}
\overline Z \Phi^{-1}_3 (\Phi) & = & \frac{i\partial_s \Phi_1}{2 \mathrm{J}_\Phi} \left(- \partial_{p_1} \Phi_2 - 2 \Im \left(\overline{\Phi_1} \partial_{p_1} \Phi_1\right)\right).
\end{eqnarray}
Thus, using equations \ref{zbar1}, \ref{zbar2} and \ref{zbar3}, equation \ref{eq1} becomes

\begin{multline}\label{eq2}
3 \overline{\partial_s \Phi_1} \partial_s \mu A - 2\mu \frac{\overline{\partial_s \Phi_1}}{\partial_s \Phi_1} \partial_{s,s} \Phi_1 A - \mu \overline{\partial_{s,s} \Phi_1} A \\
= -3 \partial_{p_1} \mu |\partial_s \Phi_1|^2 \left( \partial_{p_2} \Phi_2  + 2 \Im (\overline{\Phi_1} \partial_{p_2} \Phi_1)\right) \text{ ($\in \R$)}\\
+ 3 \partial_{p_2} \mu |\partial_s \Phi_1|^2  \left(\partial_{p_1} \Phi_2 + 2 \Im (\overline{\Phi_1} \partial_{p_1} \Phi_1)\right) \text{ ($\in \R$)}\\
+ \mu \left(\partial_{p_2}\Phi_2 + 2\Im (\overline{\Phi_1} \partial_{p_2})\right)\left(2\overline{\partial_s \Phi_1}\partial_{s,p_1} \Phi_1 + \partial_s \Phi_1 + \overline{\partial_{s,p_1} \Phi_1}\right)\\
- \mu \left(\partial_{p_1}\Phi_2 + 2\Im (\overline{\Phi_1} \partial_{p_1})\right)\left(2\overline{\partial_s \Phi_1}\partial_{s,p_2} \Phi_1 + \partial_s \Phi_1 + \overline{\partial_{s,p_2} \Phi_1}\right).
 \end{multline}
We can now use equation \ref{eq2} in equation \ref{dsB} and take advantage of terms that are real valued to find:

\begin{multline*}
\frac{\Im (D)}{\mu} = \left(\partial_{p_2} \Phi_2 + 2 \Im \left(\overline{\Phi_1} \partial_{p_2} \Phi_1\right)\right)\Im(\overline{\partial_s \Phi_1} \partial_{s,p_1} \Phi_1) \\
- \left(\partial_{p_1} \Phi_2 + 2 \Im \left(\overline{\Phi_1} \partial_{p_1} \Phi_1\right)\right) \Im(\overline {\partial_s \Phi_1} \partial_{s,p_2} \Phi_1)+\Im (\overline{\partial_s \Phi_1} \partial_s A)
\end{multline*}
Finally, we need to show that
\begin{multline*}
\Im \left(\overline {\partial_s \Phi_1} \partial_s A \right) =  -\left(\partial_{p_2} \Phi_2 + 2 \Im \left(\overline{\Phi_1} \partial_{p_2} \Phi_1\right)\right) \Im \left(\overline{ \partial_s \Phi_1} \partial_{s,p_1} \Phi_1\right)\\
+ \left( \partial_{p_1} \Phi_2 + 2 \Im \left(\overline{\Phi_1} \partial_{p_1} \Phi_1\right)\right)\Im\left( \overline{\partial_s \Phi_1} \partial_{s,p_2} \Phi_1\right).
\end{multline*}
For that, we compute $\partial_s A$:
\begin{multline*}
\partial_s A = \partial_{s,p_1} \Phi_2 \partial_{p_2} \Phi_1 + \partial_{p_1} \Phi \partial_{s,p_2} \Phi_1 - \partial_{s,p_2} \Phi_2\partial_{p_1} \Phi_1 - \partial_{p_2} \partial_{s,p_1} \Phi_1\\
+ 2 \partial_s \Phi_1 \Im \left( \partial_{p_1} \Phi_1 \overline{\partial_{p_2} \Phi_1}\right) + 2 \Phi_1 \Im\left(\partial_{s,p_1} \Phi \overline{\partial_{p_2} \Phi_1} + \partial_{p_1} \overline{\partial_{s,p_2} \Phi_1}\right).
\end{multline*}
Since $\partial_s \Phi_2 = - 2 \Im \left(\overline{\Phi_1}\partial_s \Phi_1\right)$, we find
\begin{multline*}
\partial_s A = \partial_{s,p_1} \Phi_1 \left(- \partial_{p_2} \Phi_2 - 2 \Im\left(\overline{\Phi_1} \partial_{p_2} \Phi_1\right)\right)\\
+\partial_{s,p_2} \Phi_1 \left(\partial_{p_1} \Phi_2 + 2 \Im\left(\overline{\Phi_1} \partial_{p_1} \Phi_1\right)\right)\\
+ 4 \partial_s \Phi_1 \Im\left(\partial_{p_1}\Phi_1\overline{\partial_{p_2}\Phi_1}\right).
\end{multline*}
Consequently,
\begin{multline*}
\Im \left(\overline{\partial_s \Phi_1} \partial_s A\right) = \left(-\partial_{p_2} \Phi_2 - 2 \Im \left(\overline{\Phi_1} \partial_{p_2} \Phi_1\right)\right)\Im\left(\overline {\partial_s \Phi_1} \partial_{s,p_1} \Phi_1\right)\\
+ \left(\partial_{p_1} \Phi_2 + 2 \Im\left(\overline{\Phi_1}\partial_{p_1} \Phi_1\right)\right) \Im\left(\overline{\partial_s \Phi_1} \partial_{s,p_2} \Phi_1\right)
\end{multline*}

\end{proof}

As for the complex case, if in addition to the hypotheses of the theorem we ask the curves $\Phi(.,p)$ to all have the same $q$-length $C>0$, then $l_\Gamma$ becomes constant equal to $C$ and we have the following corollary.

\begin{corollary}\label{coro2}
Let $\Omega$ be a domain in the Heisenberg group foliated by a family $\Gamma$ of legendrian curves. Assume that there is a quadratic differential $q$ on $\Omega$ such that $B_2 q = 0$ and every curve in $\Gamma$ is a horizontal trajectory for $q$ with $q$-length $C>0$. Then, the modulus of $\Gamma$ is
\[ M_4 (\Gamma) = \frac{1}{C^4} \mathrm{Vol_q} (\Omega).\]
\end{corollary}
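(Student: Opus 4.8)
The plan is to read the statement off directly from Theorem~\ref{th2}. The extra hypothesis here is that every leaf of $\Gamma$ carries the \emph{same} $q$-length $C>0$; since $l_\Gamma$ is by definition the function assigning to a point the $q$-length of the leaf through it, this says exactly that $l_\Gamma$ is constant, equal to $C$, throughout $\Omega$. First I would record a $C^2$-diffeomorphism $\Phi\colon I\times\Lambda\to\Omega$ parametrizing the foliation with each $\Phi(.,p)$ legendrian, so that the leaves are precisely the curves $\Phi(.,p)$; this is the data Theorem~\ref{th2} requires, and it is furnished by the assumption that $\Omega$ is foliated by the legendrian family $\Gamma$.

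With $l_\Gamma\equiv C$ in hand, I would substitute into the conclusion of Theorem~\ref{th2}:
\[
M_4(\Gamma)=\int_\Omega\frac{|q|^2}{(l_\Gamma)^4}\,\mathrm{d}L^3
=\frac{1}{C^4}\int_\Omega|q|^2\,\mathrm{d}L^3 .
\]
Recalling from Definition~\ref{defvol} that $\mathrm{Vol}_q(\Omega)=\int_\Omega|q|^2$, the right-hand side is $\frac{1}{C^4}\,\mathrm{Vol}_q(\Omega)$, which is the claim.

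There is no genuine obstacle in the argument: it is a single substitution of a constant into an already established formula, exactly parallel to the passage from Theorem~\ref{th1} to Corollary~\ref{coro1} in the complex-plane setting. The only point deserving a word of care is the passage from the abstract phrasing ``domain foliated by a family of legendrian curves'' to the parametrized hypotheses of Theorem~\ref{th2}; once one fixes (globally, or if needed chart by chart) a $C^2$ parametrization $\Phi$ whose leaves are the members of $\Gamma$, the constancy of $l_\Gamma$ does all the remaining work.
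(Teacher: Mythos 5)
Your proposal is correct and matches the paper's own treatment: the paper derives Corollary~\ref{coro2} exactly this way, noting that the equal-length hypothesis makes $l_\Gamma$ constant equal to $C$ and then substituting into the formula of Theorem~\ref{th2}, with $\mathrm{Vol}_q(\Omega)=\int_\Omega |q|^2$ from Definition~\ref{defvol}. Your added remark about fixing a parametrization $\Phi$ to meet the hypotheses of Theorem~\ref{th2} is a reasonable point of care that the paper passes over silently.
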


\subsection{Examples}
\subsubsection*{Explicit computation of moduli}
As an explicit example of application of theorem \ref{th2}, we want to discuss about the quadratic differential $q_0 = \left[\frac{\overline z^2 (t^2+|z|^4)^\frac{2}{3}}{|z|^{\frac{8}{3}}(t+i|z|^2)^2}\mathrm{d}z^2\right]_\omega$ in the annulus $A_{r} = \{ (z,t) \in \h \ | \ 1 < \|(z,t)\|_\h < r\}$ for $r>1$. The horizontal trajectories of $q_0$ are the spherical arcs:
\[ \gamma_{x,\alpha} (s) = \left( \sqrt{e^x \sin (s)} \alpha e^\frac{is}{2}, e^x \cos (s)\right) \text{ for $|\alpha|=1$ and $x \in ]0, 2\ln(r)[$}\]
with $s\in ]0,\pi[$. Denote $\Gamma = \{\gamma_{x,\alpha} \ | \ 0<x<2 \ln (r) \ \& \ |\alpha|=1\}$. Then $\Gamma$ is the foliation of $A_r \backslash (\{0\} \times \R)$ by horizontal trajectories for $q_0$ which satisfies $B_2 q_0 = 0$. For every $(s,x,\alpha)$, we easily compute
\[ q_0 (\gamma_{x,\alpha} ' (s)) = \frac{1}{4\sin ^\frac{4}{3} (s)}\]
and thus, for every $(x,\alpha)$, we have
\[l_{q_0} (\gamma_{x,\alpha}) = \frac{1}{2} \int_0 ^\pi \sin^{-\frac{2}{3}} (s) \mathrm{d}s.\]
Notice that all curves in $\Gamma$ have the same $q_0$-length which we denote $C = \frac{1}{2} \int_0 ^\pi \sin^{-\frac{2}{3}} (s) \mathrm{d}s$.

Now, the $q_0$-volume of $A_r \backslash(\{0\} \times \R)$ is easy to compute (by substitution) in terms of $l_{q_0} (\gamma_{x,\alpha})$, we find
\begin{eqnarray*}
\mathrm{Vol}_{q_0} (A_r \backslash(\{0\} \times \R)) & =.& \int_{A_r \backslash(\{0\} \times \R)} \frac{\mathrm{d}L^3}{|z|^\frac{4}{3}(t^2+|z|^4)^\frac{2}{3}}\\
& = & 2 \pi \int_0 ^{2\ln (r)} \int_0 ^\pi \frac{e^{2x}}{2e^{\frac{2x}{3}} \sin^\frac{2}{3} (s) e^{\frac{4x}{3}}} \mathrm{d}s\mathrm{d}x\\
& = & 4 \pi C \ln (r) .
\end{eqnarray*}
Then, Corollary \ref{coro2} gives the modulus of $\Gamma$:
\[M_4 (\Gamma) = \frac{4\pi \ln (r)}{C^3}.\]

The same thing can be done for the vertical trajectories of $q_0$. The theorem still applies to vertical trajectories since vertical trajectories of a quadratic differential $q$ are horizontal trajectories of $-q$. The vertical trajectories are the radii
\[\delta_{y,\beta} (s) = \left(\sqrt{e^s\sin(y)}\beta e^{-\frac{is}{2}\cot(y)}, e^s \cos (y)\right) \text{ for $|\beta|=1$ and $y \in ]0, \pi[$}\]
with $s\in ]0,2\ln (r)[$. Denote $\Delta = \{ \delta_{y,\beta} \ | \ 0<y<\pi \ \& \ |\beta| = 1\}$. $\Delta$ is the foliation of $A_r \backslash (\{0\} \times \R)$ by vertical trajectories for $q_0$. For every $(s,y,\beta)$, we easily compute
\[ q_0 (\delta_{y,\beta}'(s)) = -\frac{1}{4\sin ^\frac{4}{3} (y)}\]
and thus, for every $(y,\beta)$, $l_{q_0} (\delta_{y,\beta}) = \frac{\ln (r)}{\sin ^\frac{2}{3} (y)}$ which leads to
\[ l_\Delta (z,t) = \frac{(t^2+|z|^4)^\frac{1}{3} \ln (r)}{|z|^\frac{4}{3}}.\]
Finally, we use Theorem \ref{th2} to compute (by substitution) the modulus of $\Delta$:
\begin{eqnarray*}
M_4 (\Delta) & = & \int_{A_r} \frac{|q_0|^2}{(l_\Delta)^4}\\
& = & \frac{1}{\ln (r)^4}\int_{A_r} \frac{|z|^4}{(t^2+|z|^4)^2} \mathrm{d}L^3\\
& = & \frac{2\pi}{\ln (r)^3}\int_0 ^\pi \sin^2 (y) \mathrm{d}y\\
& = & \frac{\pi^2}{\ln (r) ^3}.
\end{eqnarray*}

\subsubsection*{General strategy}
As an end, let us give a general strategy to compute the modulus of a foliation by Legendrian curves, applying theorem \ref{th2}.\\
Let $\Phi : I\times \Lambda \rightarrow \Omega$ be a diffeomorphism. Then, the family of quadratic differentials for which the $\Phi(.,p)$ are horizontal trajectories is the family
\[ \left\{ q_f = \left[\frac{f}{(\partial_s \Phi_1)^2 \circ \Phi^{-1}} \mathrm{d} z^2\right]_\omega \ | \ f:\Omega \rightarrow ]0,+\infty[\right\}.\]
Thus, to apply theorem \ref{th2} one needs to find a positive valued function $f$ such that $B_2 q_f = 0$.

\begin{remark}
A quadratic differential with prescribed trajectories in the kernel of $B_2$ is (if it exists) unique up to a positive constant. Indeed, assume that $q$ is a quadratic differential and $q'$ are quadratic differentials with the same horizontal trajectories. We just saw that there must be a positive valued function $f$ such that $q' = fq$. If we assume moreover that both $q$ and $fq$ annihilate $B_2$. Then,
\[ 0 = B_2 (fq) = f^2 B_2 q + \left(|q|^2 \left( \overline Z(f^2) + f\overline Z f\right)\right)\omega\wedge\mathrm{d}\overline z \otimes \omega = \left(3 f |q|^2 \overline Z f\right)\omega\wedge\mathrm{d}\overline z \otimes \omega.\]
Thus, $f$ is a positive CR function, hence constant.
 \end{remark}

\begin{bibdiv}
\begin{biblist}

\bib{Ahl}{book}{
   author={Ahlfors, L. V.},
   title={Lectures on quasiconformal mappings},
   series={Manuscript prepared with the assistance of Clifford J. Earle, Jr.
   Van Nostrand Mathematical Studies, No. 10},
   publisher={D. Van Nostrand Co., Inc., Toronto, Ont.-New York-London},
   date={1966},
   pages={v+146},
   review={\MR{0200442}},
}

\bib{BFP}{article}{
   author={Balogh, Z. M.},
   author={F\"assler, K.},
   author={Platis, I. D.},
   title={Modulus method and radial stretch map in the Heisenberg group},
   journal={Ann. Acad. Sci. Fenn. Math.},
   volume={38},
   date={2013},
   number={1},
   pages={149--180},
   issn={1239-629X},
   review={\MR{3076803}},
   doi={10.5186/aasfm.2013.3811},
}

\bib{BFP2}{article}{
   author={Balogh, Z. M.},
   author={F\"assler, K.},
   author={Platis, I. D.},
   title={Uniqueness of minimisers for a Gr\"otzsch-Belinski\u\i \ type inequality
   in the Heisenberg group},
   journal={Conform. Geom. Dyn.},
   volume={19},
   date={2015},
   pages={122--145},
   issn={1088-4173},
   review={\MR{3343051}},
   doi={10.1090/ecgd/278},
}

\bib{Bog}{book}{
   author={Boggess, A.},
   title={CR manifolds and the tangential Cauchy-Riemann complex},
   series={Studies in Advanced Mathematics},
   publisher={CRC Press, Boca Raton, FL},
   date={1991},
   pages={xviii+364},
   isbn={0-8493-7152-X},
   review={\MR{1211412}},
}

\bib{BS}{article}{
   author={Burns, D., Jr.},
   author={Shnider, S.},
   title={Spherical hypersurfaces in complex manifolds},
   journal={Invent. Math.},
   volume={33},
   date={1976},
   number={3},
   pages={223--246},
   issn={0020-9910},
   review={\MR{0419857}},
   doi={10.1007/BF01404204},
}

\bib{GL}{article}{
   author={Garfield, P. M.},
   author={Lee, J. M.},
   title={The Rumin complex on CR manifolds},
   note={CR geometry and isolated singularities (Japanese) (Kyoto, 1996)},
   journal={S\=urikaisekikenky\=usho K\=oky\=uroku},
   number={1037},
   date={1998},
   pages={29--36},
   review={\MR{1660485}},
}

\bib{Hub}{book}{
   author={Hubbard, J. H.},
   title={Teichm\"uller theory and applications to geometry, topology, and
   dynamics. Vol. 1},
   note={Teichm\"uller theory;
   With contributions by Adrien Douady, William Dunbar, Roland Roeder,
   Sylvain Bonnot, David Brown, Allen Hatcher, Chris Hruska and Sudeb Mitra;
   With forewords by William Thurston and Clifford Earle},
   publisher={Matrix Editions, Ithaca, NY},
   date={2006},
   pages={xx+459},
   isbn={978-0-9715766-2-9},
   isbn={0-9715766-2-9},
   review={\MR{2245223}},
}

\bib{HM}{article}{
   author={Hubbard, J.},
   author={Masur, H.},
   title={Quadratic differentials and foliations},
   journal={Acta Math.},
   volume={142},
   date={1979},
   number={3-4},
   pages={221--274},
   issn={0001-5962},
   review={\MR{523212}},
   doi={10.1007/BF02395062},
}

\bib{IT}{book}{
   author={Imayoshi, Y.},
   author={Taniguchi, M.},
   title={An introduction to Teichm\"uller spaces},
   note={Translated and revised from the Japanese by the authors},
   publisher={Springer-Verlag, Tokyo},
   date={1992},
   pages={xiv+279},
   isbn={4-431-70088-9},
   review={\MR{1215481}},
   doi={10.1007/978-4-431-68174-8},
}

\bib{Iva}{article}{
author ={Ivanov, N. V.},
title={Isometries of Teichmu\"ller spaces from the point of view of Mostow rigidity},
journal={Topology, ergodic theory, real algebraic geometry}
note ={Amer. Math. Soc. Transl. Ser. 2, vol. 202, Amer. Math. Soc., Providence, RI, 2001}
pages = {131--149}
review = {\MR{1819186}}
}

\bib{Jac}{book}{
   author={Jacobowitz, H.},
   title={An introduction to CR structures},
   series={Mathematical Surveys and Monographs},
   volume={32},
   publisher={American Mathematical Society, Providence, RI},
   date={1990},
   pages={x+237},
   isbn={0-8218-1533-4},
   review={\MR{1067341}},
   doi={10.1090/surv/032},
}

\bib{Jen}{article}{
author = {Jenkins, J. A.},
title = {On the existence of certain extremal metrics},
journal = {Ann. of Math.},
volume = {66},
date = {1957},
pages={440--453}
}

\bib{Ker}{article}{
author = {Kerckhoff, S. P.},
title = {The asymptotic geometry of Teichm\"uller space},
journal = {Topology},
volume = {19},
date = {1980},
pages = {23--41}
}

\bib{Kim}{article}{
title={Quasiconformal conjugacy classes of parabolic isometries of complex hyperbolic space},
author={Y. Kim},
journal={Pacific J. Math.},
volume={270},
number={1},
date={2014},
pages={129--149},
doi={10.2140/pjm.2014.270.129}
review={\MR{3245851}}
}

\bib{KR}{article}{
   author={Kor\'anyi, A.},
   author={Reimann, H. M.},
   title={Quasiconformal mappings on the Heisenberg group},
   journal={Invent. Math.},
   volume={80},
   date={1985},
   number={2},
   pages={309--338},
   issn={0020-9910},
   review={\MR{788413}},
   doi={10.1007/BF01388609},
}

\bib{KR2}{article}{
   author={Kor\'anyi, A.},
   author={Reimann, H. M.},
   title={Foundations for the theory of quasiconformal mappings on the
   Heisenberg group},
   journal={Adv. Math.},
   volume={111},
   date={1995},
   number={1},
   pages={1--87},
   issn={0001-8708},
   review={\MR{1317384}},
   doi={10.1006/aima.1995.1017},
}

\bib{Min}{article}{
title={Quasiconformal equivalence of spherical CR manifolds},
author={R. R. Miner},
journal={Ann. Acad. Sci. Fenn. Ser. A I Math.},
volume={19},
number={1},
date={1994},
pages={83--93},
review={\MR{1246889}}
}

\bib{PT}{book}{
 TITLE = {{On Teichmueller's metric and Thurston's asymmetric metric on Teichmueller space}},
  AUTHOR = {Papadopoulos, A.}
  author= {Th{\'e}ret, G.},
  URL = {https://hal.archives-ouvertes.fr/hal-00129729},
  SUBTITLE = {Handbook of Teichm{\"u}ller Theory, Vol. 1},
  EDITOR = {A. Papadopoulos},
  PUBLISHER = {{European Mathematical Society Publishing House}},
  SERIES = {IRMA Lectures in Mathematics and Theoretical Physics},
  VOLUME = {11},
  YEAR = {2007},
  DOI = {10.4171/029-1/3},
  PDF = {https://hal.archives-ouvertes.fr/hal-00129729/file/06017.pdf},
  pages = {111--204}
  }

\bib{Rum}{article}{
   author={Rumin, M.},
   title={Formes diff\'erentielles sur les vari\'et\'es de contact},
   language={French},
   journal={J. Differential Geom.},
   volume={39},
   date={1994},
   number={2},
   pages={281--330},
   issn={0022-040X},
   review={\MR{1267892}},
}

\bib{Str}{book}{
author={Strebel, K.},
title={Uber quadratische differentiale mit geschlossen trajectorien und extremale quasikonforme abbildungen},
series={Festband zum 70 Geburstag von Rolf Nevanlinna},
publisher={Springer-Verlag, Berlin}
date={1966}
}

\bib{Tan}{article}{
   author={Tang, P.},
   title={Regularity and extremality of quasiconformal homeomorphisms on CR
   $3$-manifolds},
   journal={Ann. Acad. Sci. Fenn. Math.},
   volume={21},
   date={1996},
   number={2},
   pages={289--308},
   issn={0066-1953},
   review={\MR{1404088}},
}

\bib{Tim}{article}{
   author={Timsit, R.},
   title={Geometric construction of quasiconformal mappings in the
   Heisenberg group},
   journal={Conform. Geom. Dyn.},
   volume={22},
   date={2018},
   pages={99--140},
   issn={1088-4173},
   review={\MR{3845546}},
   doi={10.1090/ecgd/323},
}

\bib{Tim2}{article}{
author={Timsit, R.},
title={Quadratic differentials in spherical CR geometry},
date={2018}
eprint={https://arxiv.org/pdf/1807.07844.pdf}
}

\bib{Vas}{book}{
   author={Vasil\cprime ev, A.},
   title={Moduli of families of curves for conformal and quasiconformal
   mappings},
   series={Lecture Notes in Mathematics},
   volume={1788},
   publisher={Springer-Verlag, Berlin},
   date={2002},
   pages={x+211},
   isbn={3-540-43846-7},
   review={\MR{1929066}},
   doi={10.1007/b83857},
}

\end{biblist}
\end{bibdiv}

\Addresses

\end{document}